\definecolor{darkgreen}{rgb}{0,0.45,0}
\definecolor{darkred}{rgb}{0.75,0,0}
\definecolor{darkblue}{rgb}{0,0,0.6}
\definecolor{dkblue}{rgb}{0,0.1,0.5}
\definecolor{lightblue}{rgb}{0,0.5,0.5}
\definecolor{dkgreen}{rgb}{0,0.4,0}
\definecolor{dk2green}{rgb}{0.4,0,0}
\definecolor{dkviolet}{rgb}{0.6,0,0.8}
\newlength{\ontotipoffset}
\tikzset{>=xyto}
\tikzset{
  cd/.style={->,auto,font=\scriptsize},
  homot/.style={cd,double,double equal sign distance,-implies,shorten >=0.5ex,shorten <=0.5ex},
  close/.style={inner sep=2.5pt},
  closer/.style={inner sep=1.5pt},
  closest/.style={inner sep=0.5pt},
  descr/.style={fill=white}}
\newbox\pbbox
\def\pb{\save[]+<3.5mm,-3.5mm>*{\copy\pbbox} \restore}
\theoremstyle{plain}
\newtheorem{theorem}{Theorem}[subsection]
\newtheorem{fact}[theorem]{Fact}
\newtheorem{proposition}[theorem]{Proposition}
\newtheorem{lemma}[theorem]{Lemma}
\newtheorem{corollary}[theorem]{Corollary}
\theoremstyle{definition}
\newtheorem{definition}[theorem]{Definition}
\newtheorem{example}[theorem]{Example}
\newtheorem{remark}[theorem]{Remark}
\renewcommand{\paragraph}{\@startsection{paragraph}{4}{0mm}{-0.5\baselineskip}{-1ex}{\bf}}
    \newcommand*{\qrr@gobblenexttocentry}[5]{}
    \newcommand*{\qrr@gobblenexttocentry}[4]{}
\newcommand*{\addsubsection}{%
    \addtocontents{toc}{\protect\qrr@gobblenexttocentry}%
    \subsection}
\newcommand{\one}{\mathbf{1}}
\newcommand{\C}{\mathbf{C}}
\newcommand{\SynCat}[1]{\mathcal{C}({#1})}
\newcommand{\cF}{\mathcal{F}}
\newcommand{\I}{\mathcal{I}}
\newcommand{\N}{\mathbb{N}}
\renewcommand{\P}{\mathbf{\mathsf{P}}}
\newcommand{\T}{\mathbf{T}}
\newcommand{\cW}{\mathcal{W}}
\newcommand{\Z}{\mathbb{Z}}
\newcommand{\pinv}[1]{\overline{#1}}
\newcommand{\Ho}{\mathrm{Ho}}
\newcommand{\inv}[1]{ #1 ^{-1}}
\newcommand{\pt}{\mathsf{pt}}
\newcommand{\FML}{\ensuremath{\mathcal{H}'}}
\newcommand{\Top}{\mathbf{Top}}
\newcommand{\fib}{\twoheadrightarrow}
\newcommand{\iso}{\cong}
\newcommand{\homot}{\Rightarrow}
\renewcommand{\equiv}{\simeq}
\newcommand{\pdot}{\centerdot}
\newcommand{\name}[1]{{\ulcorner {#1} \urcorner}}
\newcommand{\of}{\mathord{:}}
\newcommand{\types}{\vdash}
\newcommand{\synId}{\mathsf{Id}}
\newcommand{\synNat}{\mathsf{Nat}}
\newcommand{\synBool}{\mathsf{Bool}}
\newcommand{\synOne}{\mathsf{1}}
\newcommand{\synU}{\mathsf{U}}
\newcommand{\refl}{\mathsf{refl}}
\let\syn\mathsf
\newcommand{\Cone}{\syn{Cone}}
\newcommand{\hfib}{\syn{hfib}}
\newcommand{\isContr}{\syn{isContr}}
\newcommand{\Eq}{\syn{Eq}}
\newcommand{\Pb}{\syn{Pb}}
\newcommand{\Lim}{\syn{Lim}}
\newcommand{\Id}[3][]{{(#2 \leadsto_{#1} #3)}}
\newenvironment{internal}{\fontfamily{\sfdefault}\selectfont}{}
\newcommand{\todo}[1]{}
\begin{document}
\title{Homotopy limits in type theory}

\author{Jeremy Avigad}
\address[Jeremy Avigad]{Carnegie Mellon University, Pittsburgh}
\email{avigad@cmu.edu}

\author{Krzysztof Kapulkin}
\address[Krzysztof Kapulkin]{University of Pittsburgh}
\email{krk56@pitt.edu}

\author{Peter LeFanu Lumsdaine}
\address[Peter LeFanu Lumsdaine]{Institute for Advanced Study, Princeton}
\email{plumsdaine@ias.edu}

\date{March 1, 2013}

\begin{abstract}
Working in homotopy type theory, we provide a systematic study of homotopy limits of diagrams over graphs, formalized in the Coq proof assistant.  We discuss some of the challenges posed by this approach to formalizing homotopy-theoretic material. We also compare our constructions with the more classical approach to homotopy limits via fibration categories.
\end{abstract}

\maketitle

\tableofcontents

\section{Introduction}

Homotopy type theory is based on the discovery that formal dependent type theory has a natural homotopy-theoretic interpretation (\cite{voevodsky:homotopy-lambda-calculus}, \cite{awodey-warren}).  Since a number of interactive proof assistants implement versions of dependent type theory, these observations open the possibility of developing parts of homotopy theory formally with the help of such assistants; see \cite{pelayo-warren:univalent-foundations-paper} for a helpful overview.

In this spirit, we carry out a number of homotopy-theoretic constructions in a core system of homotopy type theory. In particular, we define and investigate (homotopy) pullbacks, equalizers, limits over graphs, pointed spaces, and fiber sequences. The entire development is formalized with the Coq interactive proof assistant.  Besides the formalization itself, we also compare the semantics of type theory with fibration categories, a standard homotopy-theoretic setting for the construction of homotopy limits.

We assume some familiarity with type theory, but not specifically with the homotopical version; we do not assume any previous acquaintance with homotopy limits.

We should mention that many of the facts we present below are already known in folklore; in any case, none of them will be unexpected to researchers in the field.  Egbert Rijke and Bas Spitters \cite{rijke-spitters:lims-and-colims-in-hott} have also independently investigated limits and colimits over graphs within a similar type theory.  We hope it will prove useful, however, to have a systematic treatment of these basic results, fully formalized in Coq and available as a library for future use. We also hope that the practical lessons we learned during the formalization process may be useful to others.

Our Coq development builds on a library for homotopy type theory developed jointly by various people, under the leadership of Andrej Bauer, Lumsdaine, and Michael Shulman \cite{hott:repo}. Another extensive library has been developed by Vladimir Voevodsky \cite{voevodsky:univalent-foundations-coq}, and some of our verified results overlap his.

\addsubsection*{Outline}
In Section~\ref{framework:section}, we set out the the formal framework of our work: the type theory under consideration, and its intended interpretation. Along with this, we very briefly review homotopy limits in the classical setting.  In Section~\ref{sec:fundamentals}, we recall some key constructions from the type-theoretic development of homotopy theory, and use these to show that every categorical model of the theory carries the structure of a fibration category. Section~\ref{sec:further-development} presents the main body of our formalization: a concise treatment of the content, in traditional mathematical prose. Finally, in Section~\ref{reflections:section} we share some reflections on practical aspects of the formalization process.

Our formal development in Coq can be found in the files associated with the journal publication of this paper, and also online at
\begin{center}
\url{https://github.com/peterlefanulumsdaine/hott-limits/tree/v1}.
\end{center}
The Github version will be maintained for compatibility with Coq and the HoTT library.
 
References to the formal code are typeset in \texttt{a teletype font}. For brevity, we omit the \texttt{.v} extension from filenames, so that, for example, \lstinline{Fundamentals.v} is cited as \lstinline{Fundamentals}. Often a single lemma in the informal presentation below translates to a cluster of formal lemmas in our files, in which case we simply cite a representative element of that cluster.

\addsubsection*{Acknowledgements.} Most of this work was carried out during the Special Year on Univalent Foundations at the Institute for Advanced Study, and all three authors are grateful to the Institute for its hospitality and support. We are also grateful to two anonymous referees for numerous helpful comments, corrections, and suggestions.

Avigad's work has been partially supported by NSF grant DMS-1068829 and AFOSR grant FA9550-12-1-0370. Kapulkin was supported by NSF Grant DMS-1001191 (P.I.~Steve Awodey) and a grant from the Benter Foundation (P.I.~Thomas Hales). Lumsdaine was supported by NSF grant DMS-1128155. Any opinions, findings, and conclusions or recommendations expressed in this material are those of the authors and do not necessarily reflect the views of the National Science Foundation.

Kapulkin dedicates this work to his mother.

\section{Background}
\label{framework:section}

In this section, we first lay out the specific logical system in which we will work.  We then review its intended semantics, insofar as they are relevant to working within the theory, and fix the basic notation and terminology we will use, based on the intended semantics.  Finally, we very briefly review classical homotopy limits.

\subsection{Logical setting}
\label{type:theory:section}

We assume familiarity with some form of dependent type theory.  Specifically, we will work in the system of \emph{predicative Martin-L\"of type theory} \cite{martin-lof:bibliopolis}. The following types, and associated rules, form a minimal core to that system:
\begin{enumerate}
 \item\label{enum:Pi} dependent products $\Pi_{x : A} B$, and the associated introduction, elimination, and computation rules
 \item\label{enum:Sigma} dependent sums $\Sigma_{x : A} B$, and the associated introduction, elimination, and computation rules
 \item\label{enum:Id} identity types $\synId_A$, and the associated introduction, elimination, and computation rules
\end{enumerate}

Most developments in homotopy type theory add at least the following rule:
\begin{enumerate}
 \setcounter{enumi}{3}
 \item\label{enum:funext} function extensionality: for any type $A$, any type $B$ depending on $x : A$, and functions $f, g :\Pi_{x : A} B$, if $f x = g x$ for every $x : A$, then $f = g$.
\end{enumerate}

The system based on these rules is used in \cite{awodey-gambino-sojakova:inductive}, where it is denoted by $\mathcal{H}$; it also forms a sufficient basis for much of the present formalization.  Some of our constructions, in addition, depend on:
\begin{enumerate}
 \setcounter{enumi}{4}
 \item the type $\synNat$ of natural numbers, with the usual introduction, elimination, and computation rules,
\end{enumerate}
from which the empty type, unit type, and other finite types can be defined; and finally, some definitions presuppose the existence of:
\begin{enumerate}
 \setcounter{enumi}{5}
 \item a universe $\synU$ of types, containing $\synNat$, and closed under the formation of dependent products, sums, and identity types.
\end{enumerate}

We use quantification over the universe to define the universal properties of pullbacks and limits, but also give equivalent formulations that do not make use of such a universe.

Besides these, the version of Coq we used implements $\eta$-conversion for functions, $\lambda x. f x = f$, as a built-in conversion rule. As a propositional equality, it is derivable from function extensionality, so we do not believe its use is essential; however, since it is unavoidably present in the proof assistant, we include it in our formal theory.

In sum, if we take axioms (\ref{enum:Pi})--(\ref{enum:Id}) to represent the core of Martin-L\"of type theory, $\mathsf{ML}$, it is then reasonable to denote our overall framework as
\[
 \mathsf{ML} + (\mathsf{funext}) + (\eta) + (\synNat) + (\synU).
\]
For brevity, we refer to this in the present paper as \FML; thus the formal content of our work is that the constructions and assertions of Sections~\ref{sec:fundamentals} and~\ref{sec:further-development} are consequences of this formal theory. As noted above, however, most of our results do not require $\synNat$, and many do not require $\synU$.

We do not consider in the present work extra axioms such as Univalence, resizing, or higher inductive types.

One final comment about the formal verification: rather than providing $\synId$, $\synNat$, $\synBool$, and so on individually, Coq provides a general mechanism for defining inductive types, which these are then defined as instances of.  However, the resulting eliminators for these types correspond precisely to the rules for them described above.  Coq also provides (dependent) record types, as syntactic sugar for certain inductive types; in some cases, using record types made type checking more efficient, and brought notational benefits as well.  As these may be routinely translated into (iterated) $\Sigma$-types, their use has no bearing on the question of derivability in \FML.

\subsection{Semantics}
\label{sec:semantics}

\subsubsection{General algebraic semantics}

The fully general semantics of dependent type theories are, from a purely algebraic point of view, well-understood.  Essentially, a model of any dependent type theory $\T$ with the same basic judgements and structural rules as \FML\ may be defined as a \emph{contextual category}---that is, a category equipped with structure sufficient to model the structural rules---along with further algebraic structure corresponding to the logical constructors and axioms of $\T$.  For the details of this definition, see \cite{streicher:semantics-book}; for brevity, we will refer to such a structure as a \emph{categorical model of $\T$}.

The justification for calling such structures models comes from the fact that the syntax of the theory forms an initial such structure:

\begin{definition}
Given any dependent type theory $\T$, the \emph{syntactic category} $\SynCat{\T}$ is given as follows:
\begin{itemize}
\item objects of $\SynCat{\T}$ are contexts $[ x_1 \of A_1,\ \ldots,\ x_n \of A_n ]$ of $\T$, up to definitional equality and renaming of free variables;
\item maps of $\SynCat{\T}$ are \emph{context morphisms} (a.k.a.\ \emph{substitutions}), again up to definitional equality and renaming of free variables.  That is, a map
\[f \colon [ x_1 \of A_1,\ \ldots,\ x_n \of A_n] \to [ y_1 \of B_1,\ \ldots,\ y_m \of B_m(y_1, \ldots, y_{m-1}) ] \]
is represented by a sequence of terms
\begin{equation*}
\begin{split}
  x_1 \of A_1,\ \ldots,\ x_n \of A_n & \types f_1 : B_1 \\
  & \vdots  \\
  x_1 \of A_1,\ \ldots,\ x_n \of A_n & \types f_m : B_m(f_1,\ \ldots,\ f_{m-1}).
\end{split}
\end{equation*}
\end{itemize}

Moreover, $\SynCat{\T}$ may naturally be given the structure of a contextual category; for each logical rule of $\T$, $\SynCat{\T}$ carries the corresponding algebraic structure.
\end{definition}

\begin{fact}[\cite{streicher:semantics-book}\protect{\footnote{Unfortunately, to our knowledge, no general form of this result exists in the literature; it is shown for certain specific type theories in \cite{streicher:semantics-book} and elsewhere, and its extension to other combinations of the standard rules (such as \FML) is well-known in folklore.}}]
$\SynCat{\T}$ is initial among categorical models of $\T$.
\end{fact}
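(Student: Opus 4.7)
The plan is the standard one for initiality of syntactic categories: construct by recursion on raw syntax a partial \emph{interpretation function} $\interp{-}$ into any given categorical model $\C$, prove soundness (the interpretation is total on derivable judgements and respects definitional equality), deduce that it factors through the quotient defining $\SynCat{\T}$, verify that the resulting functor preserves all the logical structure, and finally show that any such structure-preserving functor out of $\SynCat{\T}$ must coincide with it.

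More concretely, I would first fix a categorical model $\C$ and define $\interp{-}$ simultaneously on raw contexts, raw types-in-context, raw terms, and raw context morphisms, by recursion on syntax. Each clause is forced by the corresponding algebraic operation on $\C$: the empty context goes to the terminal object, a context extension $[\Gamma, x\of A]$ goes to the distinguished extension of $\interp{\Gamma}$ by $\interp{A}$, a $\Pi$-type goes to the $\Pi$-structure of $\C$, an identity term goes to the $\synrefl$-section, and so on. Because $\interp{-}$ is defined on raw syntax, it is only partial; totality on derivable judgements is proved simultaneously with a list of coherence properties (compatibility with weakening and substitution, stability under renaming of free variables, and preservation of all definitional equalities) by a single large induction on derivations. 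This is the step I expect to be the main obstacle: the substitution clause of soundness and the definitional-equality clauses force one to prove, for each logical constructor, that its interpretation in $\C$ is strictly stable under the categorical substitution operation, which is exactly where the usual friction between syntax (strict) and semantics (up to iso) would appear if $\C$ were not already assumed to carry the structure strictly as a contextual category.

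Once soundness is in hand, the interpretation descends to a map $F \colon \SynCat{\T} \to \C$ on objects and morphisms: $\alpha$-equivalence and definitional equality are exactly the relations quotiented out in forming $\SynCat{\T}$, and functoriality (preservation of identities and of composition by substitution) is a direct consequence of the substitution clauses of soundness. That $F$ is a morphism of contextual categories preserving each piece of logical structure is immediate from the recursive clauses: the interpretation was designed so that, for instance, $F(\interp{\synPi_{x\of A}B}) = \synPi_{F\interp{A}} F\interp{B}$ on the nose, and similarly for $\synSigma$, $\synId$, and any other constants of $\T$.

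For uniqueness, suppose $G \colon \SynCat{\T} \to \C$ is any other structure-preserving morphism. By induction on the length of a context, $G$ must agree with $F$ on objects: the empty context is forced to go to the terminal object, and each context extension $[\Gamma, x\of A]$ is the canonical extension of $\Gamma$ by the type $A$, which $G$ must preserve. The same induction, now on terms and context morphisms, shows that $G$ must send each logical constructor to its counterpart in $\C$, because $G$ preserves all the relevant structure and the generators of $\SynCat{\T}$ are precisely these constructors. Hence $G = F$, and $\SynCat{\T}$ is initial. The only genuinely delicate point in the whole argument is the coherence bookkeeping in the soundness induction; everything else is formal.
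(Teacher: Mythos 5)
The paper offers no proof of this Fact: it is stated with a citation to \cite{streicher:semantics-book} and a footnote conceding that only specific instances appear in the literature, the general case being folklore. Your outline --- partial interpretation on raw syntax, soundness and coherence by one large induction on derivations, descent to the quotient, and uniqueness by induction on contexts and constructors --- is precisely the standard Streicher-style argument the citation points to (and you correctly note that strictness of the contextual-category structure is what keeps the coherence step manageable), so your approach matches the paper's intended proof; the only caveat is that, like the paper, you defer rather than carry out the main induction.
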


Thus any other categorical model $\C$ has a canonical structure-preserving functor from $\SynCat{\T}$---that is, an interpretation function, interpreting the syntax of $\T$ in $\C$.

\subsubsection{Homotopical semantics}

Homotopy type theory is based on the realization (\cite{hofmann-streicher}, \cite{awodey-warren}, \cite{garner-van-den-berg:top-and-simp-models}, \cite{voevodsky:notes-on-type-systems}) that various homotopy-theoretic settings give natural examples of such categorical models.  Very roughly, a type $A$ denotes a space; a family $B(x)$ of types, depending on some variable $x$ of type $A$, denotes a fibration over $A$; a term $t(x)$ of type $A'$, again dependent on a variable $x : A$, denotes a continuous map from $A$ to $A'$; a term $t(x)$ of type $B(x)$, dependent on $x : A$, denotes a section of the corresponding fibration over $A$; and so on.

The main motivating interpretation, for us, is the model in simplicial sets---one of the most well-studied models of spaces in homotopy theory.  The full details of this interpretation are rather technical, so since we never require them, we omit them here; see \cite{kapulkin-lumsdaine-voevodsky:simplicial-model} for a complete presentation of the simplicial set model, and \cite{shulman:inverse-diagrams} for more general related models. We sketch here just the main ingredients of the interpretation, insofar as they justify the intuition and terminology for working within the theory.

In this model, closed types (and, more generally, contexts) are interpreted as Kan complexes; dependent types, as Kan fibrations.  Most type formers---$\Pi$-types, $\Sigma$-types, $\synNat$, etc.---are interpreted as in the more familiar topos logic: $\Pi$-types by the right adjoint to pullback, $\Sigma$-types by the left, $\synNat$ by the natural numbers object, and so on.

The main novelty, however, is the interpretation of the identity type $\synId_A(x, y)$ with variables $x$ and $y$ from $A$.  In set- and topos-theoretic models, one would interpret it as the diagonal map $A \to A \times A$.  However, in simplicial sets (and other homotopy-theoretic settings) this map is hardly ever a fibration. It can, however, be replaced by a fibration $P(A) \to A \times A$, where $P(A)$ is the \emph{path object} of $A$; this is then used to interpret the identity type of $A$. Thinking of a simplicial set as a space, $P(A)$ represents the space of paths in $A$, with the fibration $P(A) \to A \times A$ giving the indexing of paths over their endpoints.  In particular contrast to the set-theoretic situation, for given $x,y : A$ the space $P(A)(x,y)$ of paths from $x$ to $y$ may be not a mere proposition, but a non-trivial space in its own right.

\subsection{Notation and terminology}

Our choices of notation and terminology are guided by the homotopical interpretation.  In particular, we will write $p : \Id{x}{y}$ for the identity type, to emphasize that we consider it as the type of paths from $x$ to $y$. The Homotopy Type Theory library uses the notation \lstinline!x = y! for this type, and in our Coq development, we stick with this. However, in the informal presentation below, we find it most natural to understand our constructions as constructions of paths, rather than equality proofs; and so we settle on the latter notation, and favor the word ``path'' over ``equality.''

In other respects, however, we have found it more convenient to leave the homotopy-theoretic interpretation implicit. For example, the natural definitions of pullbacks, equalizers, and limits in type-theoretic notation turn out to characterize homotopy pullbacks, homotopy equalizers, and homotopy limits in the homotopy-theoretic interpretation. Having kept the notion of ``path'' prominent, sprinkling the word ``homotopy'' everywhere seemed to impose an unnecessary burden; thus, both in code and in prose we refer just to ``pullbacks,'' ``equalizers,'' and ``limits.''  (This is customary in higher category theory (see, e.g.,\ \cite{lurie:htt}), when one uses, for example, the word ``limit'' for an object that in strict terms is only a homotopy limit.)

For the sake of readability, we will use standard mathematical terminology and notation in Sections~\ref{sec:fundamentals} and~\ref{sec:further-development}, rather than attempting to adhere closely to the notation used in the Coq code.  Table~\ref{table:notations} lists some of the basic notions of our development, comparing the notations used in our presentation here with those used in the Coq formalization.

\begin{table}
\begin{tabular}[b]{| c | c | c |}
 \hline
 informal& mathematical & Coq \\
 notion & notation & notation \\
 \hline
 \hline
 $p$ is a path from $x$ to $y$ & $p : \Id{x}{y}$ & \lstinline!p : x = y! \\
 \hline
 identity path at $x$ & $\refl(x)$ & \lstinline!idpath x! \\
 \hline
 concatenation of $p$ and $q$ & $p \pdot q$ & \lstinline!p @ q! \\
 \hline
 inverse of $p$ & $\pinv{p}$ & \lstinline$!p$ \\
 \hline
 $B$ is a fibration over $A$ & $B \fib A$ & \lstinline!B : A -> Type! \\
 \hline
 total space of $B$ over $A$ & $\Sigma_{x : A} B(x)$ & \lstinline!{ x : A & B x }! \\
 \hline
 dependent product of $B$ over $A$ & $\Pi_{x : A} B(x)$ & \lstinline!forall x : A, B x! \\
 \hline
 $e$ is an equivalence from $A$ to $B$ & $e \colon A \equiv B$ & \lstinline!e : A <~> B! \\
 \hline
 inverse of $e$ & $e^{-1}$ & \lstinline!e^-1! \\
 \hline
 a universe of small types & $\synU$ & \lstinline!UU! \\
 \hline
 the natural numbers & $\mathsf{Nat}$ & \lstinline!nat! \\
 \hline
\end{tabular}

\caption{Correspondence of notations}
\label{table:notations}
\end{table}

As usual in homotopy type theory, we represent logic using \emph{propositions-as-types}, with implication, conjunction, and universal and existential quantification interpreted in terms of function, product, $\Pi$-, and $\Sigma$-types respectively. Thus, for example, the functional extensionality axiom (Axiom~\ref{enum:funext} in Section~\ref{type:theory:section} above), is formally a constant of type:
\[
 \mathsf{funext} : \prod_{\substack{A : \mathsf{Type} \\ B : A \rightarrow \mathsf{Type}}} \, \prod_{f, g : \prod\limits_{x : A} B(x)} \,
  ( \prod_{x : A} \Id{f x}{g x} ) \rightarrow \Id{f}{g}.
\]

Notice that $\Sigma$-types provide a useful way of ``packaging'' related pieces of data into a single type: to illustrate this, consider Definition~\ref{def:cospan-cone} below. Formally, a \emph{cospan} consists of types $A$, $B$, and $C$, and maps $f \colon A \to C$, $g \colon B \to C$.  Given a type $X$, a \emph{cone} over this cospan with vertex $X$ consists of maps $h \colon X \to A$ and $k \colon B \to C$, and a family of paths $\Id{f(h x)}{g (k x)}$ for each $x$ in $X$. In other words, such a cone is an element of the type
\[
 \sum_{\substack{h : X \rightarrow A \\ k : X \rightarrow B}} \, \prod_{x : X} \Id{f(h x)}{g(k x)}.
\]
Thus our formal definition in Coq reads as follows:
\begin{lstlisting}
  Definition cospan_cone {A B C : Type} (f : A -> C)
    (g : B -> C) (X : Type)
  := { h : (X -> A) & { k : (X -> B)
     & forall x, paths (f(h x)) (g(k x)) }}.
\end{lstlisting}
The curly braces around the arguments \lstinline!A!, \lstinline!B!, and \lstinline!C! indicate that these are treated as \emph{implicit arguments}. This means that the user may write just \lstinline!cospan_cone f g X!, leaving the system to infer \lstinline!A!, \lstinline!B!, and \lstinline!C! from the types of \lstinline!f! and \lstinline!g!. Sometimes one needs to turn this feature off, and specify such arguments; writing \lstinline!@cospan_cone A B C f g X! tells Coq to expect all the arguments of \lstinline!cospan_cone! to be given explicitly.

\subsection{Classical homotopy limits}
\label{subsec:classical-holimits}

For the reader unfamiliar with the classical theory of homotopy limits, we briefly survey here a few of its key points.

They may be seen as a solution to the problem that ordinary (``strict'') limits are not invariant under homotopy equivalence: for instance, the two cospans below are homotopy equivalent, but their strict pullbacks are not.

\[ \begin{tikzpicture}[scale=1.8,z={(2,0)}]
\node (A) at (0,0,0) {$\ast$};
\node (B) at (1,0,0) {$\ast$};
\node (C) at (1,1,0) {$\ast$};
\node (AxC) at (0,1,0) {$\ast$};
\node (A') at (0,0,1) {$\ast$};
\node (B') at (1,0,1) {$[0,1]$};
\node (C') at (1,1,1) {$\ast$};
\node (A'xC') at (0,1,1) {$\emptyset$};
\draw[cd] (A) to (B);
\draw[cd] (C) to (B);
\draw[cd] (AxC) to (A);
\draw[cd] (AxC) to (C);
\draw (0.1,0.75) -- (0.25,0.75) -- (0.25,0.9);
\draw[cd] (A') to node {$0$} (B');
\draw[cd] (C') to node {$1$}(B');
\draw[cd] (A'xC') to (A');
\draw[cd] (A'xC') to (C');
\draw (0.1,0.75,1) -- (0.25,0.75,1) -- (0.25,0.9,1);
\end{tikzpicture} \]
This may be resolved by instead defining the \emph{homotopy pullback} $A \times^h_B C$, as the space of triples $(a,c,p)$, where $a \in A$, $c \in C$, and $p$ is a path in $B$ from $f(a)$ to $g(c)$; the equalities in the definition of the strict pullback have been replaced by paths.

More generally, the homotopy limit of a functor $F \colon \I \to \Top$ may be defined using the \emph{end} formula $\int_{i \in \I} F(i)^{\mathbf{B}(\I/i)}$.  This has the effect of replacing equalities by homotopies, in a coherent fashion; the coherence is encoded by the use of the classifying spaces $\mathbf{B}(\I/i)$.
This generalizes to other settings, first by a similar concrete construction (in e.g.\ simplicial settings \cite{bousfield-kan:book}), and more abstractly in terms of derived functors (for general homotopical categories \cite{dwyer-hirschhorn-kan-smith}).

In the $\infty$-categorical setting, one may take an alternative approach, defining the (homotopy) limit by an $\infty$-categorical universal property directly generalizing that of ordinary 1-categorical limits (see e.g.\ \cite{lurie:htt}).   In Homotopy Type Theory, we do the same.  It turns out, in fact, that at least for diagrams over graphs, what looks like the ordinary definition of a strict set-theoretic limit actually defines the homotopy limit---both as an explicit construction, and as a characterization via a universal mapping property.

\section{Fibration categories from type theory}
\label{sec:fundamentals}

In this section and the next, we develop the basic theory of homotopy limits and related notions in \FML.  We have already explained, in Section~\ref{framework:section}, how the basic ingredients are represented in the language of Coq, and complete details of the whole development can be found in the files comprising our formal verification.  Especially in Section~\ref{sec:further-development}, therefore, we will generally only sketch most proofs, leaving out steps that are straightforward and routine (and even some that are not).

\subsection{Basic constructions}
\label{sec:hott-background}

Our formal work builds on the HoTT library \cite{hott:repo} for homotopy theory developed by Bauer, Lumsdaine, Shulman, and others.  We begin by summarizing some of the basic components of this library that are used throughout.

\subsubsection{Operations on paths}

Given any $x, y : X$, we write $p : \Id{x}{y}$ to denote that $p$ is a path from $x$ to $y$. For every $x$, there is an ``identity path'' $\refl(x) : \Id{x}{x}$. The central property characterizing the type of paths is its elimination principle, which says roughly that to construct an object of a type $C(x, y, p)$ depending on a path $p$ from $x$ to $y$, it suffices to construct an element of $C(x, x, \refl(x))$, in which $p$ has been ``contracted'' to an identity path.

Paths admit various operations familiar from homotopy theory and higher category theory. Any two paths $p : \Id{x}{y}$ and $q : \Id{y}{z}$ can be concatenated, yielding a path $p \pdot q : \Id{x}{z}$. Moreover, $\refl(x) : \Id{x}{x}$ is a unit element for this operation, and every path admits an inverse $\bar{p} : \Id{y}{x}$. These operations satisfy the groupoid laws, but, as in homotopy theory, only up to a higher path. For example, we can find an inhabitant of the type $\Id{p \pdot \bar{p}}{\refl(x)}$.  In fact, every type, together with the tower of its paths, forms an $\infty$-groupoid of some sort; precise statements along these lines can be found in \cite{garner-van-den-berg}, \cite{lumsdaine:tlca-proceedings}.

Moreover, the maps between types respect the paths and the structure on them.  That is: given any $p : \Id{x}{y}$ in $X$ and $f \colon X \to Y$, we obtain a path $f[p] : \Id{f(x)}{f(y)}$; and this is functorial, in the up-to-homotopy sense that there is, for example, an inhabitant of the type $\Id{f[p \pdot q]}{f[p] \pdot f[q]}$.

\subsubsection{Equivalences and truncatedness}
\label{sec:equiv-and-trunc}
The notion of paths allows us to recover several familiar notions from algebraic topology.

We can, for example, say that a type $X$ is \emph{contractible} if there is some $x_0 : X$, and a function giving for each $x : X$ a path $\Id{x}{x_0}$.  Formally, the proposition ``$X$ is contractible'' is defined as follows:\footnote{One might at first read this as a definition of connectedness---for each $x$, there exists some path from $x$ to $x_0$---but remember that one should think of the function sending $x$ to the path as \emph{continuous}, so as giving a contraction of $X$ to $x_0$.  Precisely, in the simplicial and similar interpretations, the $\Pi$-type becomes a space of continuous functions, and so $\isContr$ gets interpreted as the property of contractibility; and moreover, working within the theory, the logic forces $\isContr$ to behave like contractibility, not like connectedness.}
\[ \isContr(X) := \sum_{x_0:X} \prod_{x:X} \Id{x}{x_0}.  \]

One can also construct the \emph{homotopy fiber} of a map $f \colon X \to Y$ over an element $y : Y$ by:
\[ \hfib(f,y) := \sum_{x : X} \Id{f(x)}{y}. \]

Given these we say that a map $f \colon X \to Y$ is an \emph{equivalence} if for all $y : Y$ the homotopy fiber of $f$ over $y$ is contractible. The HoTT library provides many crucial results on equivalences. For example, a map is an equivalence exactly if it has a two-sided inverse (up to homotopy), or alternatively two one-sided inverses.

Another notion that smoothly transfers from algebraic topology to HoTT is the notion of an $n$-type. Classically, an $n$-type is a space whose homotopy groups vanish above dimension $n$. In HoTT we define an analogous hierarchy.

Precisely, \emph{$n$-truncatedness} is defined by induction for $n \geq -2$. A type $X$ is $(-2)$-truncated if it is contractible; and is $n+1$-truncated if for all $x, y : X$, the type $\Id{x}{y}$ of paths from $x$ to $y$ is of $n$-truncated.  For short, we refer to $n$-truncated types as \emph{$n$-types}.  In particular, $(-1)$-types may be considered as propositions, carrying no more information than the fact of being inhabited; and $0$-types as (up-to-homotopy) discrete sets.  We call such types \emph{propositions} (or \emph{mere} propositions, for emphasis) and \emph{sets} respectively.

\subsubsection{Functional extensionality}

Given two types $X$ and $Y$, the type $X \rightarrow Y$ of maps between them can be equipped with the notion of a path (or rather, a ``homotopy'') in two different ways. First, for any $f, g \colon X \to Y$, one can form $\Id{f}{g}$, in the usual way. On the hand, one can also compare two functions pointwise, asking for an element of $\prod_{x : X} \Id{f(x)}{g(x)}$; we call such a function $h$ a \emph{homotopy} from $f$ to $g$, and write $h : f \homot g$.

Given any $p : \Id{f}{g}$, we obtain by the elimination principle for paths an element of the type $f \homot g$. The functional extensionality axiom implies that this assignment is an equivalence; that is, that given a pointwise homotopy between two maps, we can always find a path between them in the function type inducing the original homotopy.  More generally, functional extensionality implies this equivalence between paths and homotopies in \emph{dependent} function types $\prod_{x : A} B(x)$.

\subsubsection{Dependent sums}

The interaction between dependent sums and paths is crucial in our work. Let $B(x)$ be a type depending on $x:A$. It is easy to see that then a path $p : \Id{a}{a'}$ in $A$ induces an equivalence $p_! \colon B(a) \to B(a')$, which we call \emph{transport} between fibers. As everything before, this commutes appropriately with the operations on paths; for example, for any $p : \Id{a}{a'}$ and $q : \Id{a'}{a''}$, and $b : B(a)$ we have $\Id{(p \pdot q)_!b}{q_!(p_! b)}$.

This also provides a means to construct paths between two elements of a $\Sigma$-type. Given a path $p : \Id{(a, b)}{(a', b')}$ in a $\sum_{x : A}B(x)$, we get a pair of paths: $p_1 : \Id{a}{a'}$ and $p_2 : \Id{(p_1)_! b}{b'}$; and conversely, given such a pair of paths, we can recover the original path $p$.  This construction is ubiquitous in the formalization, since so many objects are defined using $\Sigma$-types; for more discussion, see Section~\ref{sec:constructing-paths} below.

\subsection{Fibration category structure}
\label{fibcats:section}

In this section, we show that any categorical model of \FML\ (so, in particular, its syntactic category) satisfies the axioms of a fibration category, following the lines of results such as \cite{gambino-garner}, \cite{lumsdaine:hit-model-cat}.  After this, we look at how some standard properties of fibration categories translate in terms of the type theory.

The results follow from a combination of internal reasoning---proving certain statements in the type theory---and external (meta-theoretic), showing how in models, the internal statements translate into the desired axioms.  Since we will be switching back and forth frequently between these two different logical settings, we use \textsf{sans serif text} in this section to distinguish the \textsf{internal reasoning} from the external.  The internal portions are formalized in the file \lstinline{Fundamentals}.

We start by recalling the definition of a fibration category (for more on which, see \cite{brown:abstract-homotopy-theory}, \cite{baues:algebraic-homotopy}):

\begin{definition}
 A \emph{fibration category} is a category $\C$ together with two distinguished classes of maps, $\cW$ (the  \emph{weak equivalences}) and $\cF$ (the \emph{fibrations}) satisfying the following conditions:
 \begin{enumerate}
  \item Weak equivalences satisfy the 2-out-of-6 condition; i.e.,\ given a composable triple of morphisms
  \[\xymatrix{W \ar[r]^f & X \ar[r]^g & Y \ar[r]^h & Z,}\]
  if $g\cdot f$ and $h \cdot g$ are weak equivalences, then so are $f$, $g$, $h$, and $h \cdot g \cdot f$.
  \item $\cF$ is closed under composition.
  \item Calling a map that is both a weak equivalence and a fibration an {\em acyclic fibration}, all isomorphisms are acyclic fibrations.
  \item $\C$ has a terminal object $\one$.
  \item Pullbacks along fibrations exist; fibrations and acyclic fibrations are stable under pullback.
  \item For any object $X \in \C$, the diagonal morphism $\Delta \colon X \to X \times X$ can be factored as a weak equivalence followed by a fibration:
\[ X \to PX \to X \times X. \]
(Such a factorization, and by abuse of language also the object $PX$, is called a \emph{path object for $X$}.)
  \item Every object is \emph{fibrant}; that is, the unique map $X \to \one$ is a fibration, for any $X \in \C$.
 \end{enumerate}
\end{definition}

\begin{remark}
This is slightly stronger than the original definition given by Brown, in that it requires the class $\cW$ to satisfy the 2-out-of-6 axiom rather than just the more familiar 2-out-of-3. However, once $\C$ satisfies all the other axioms, the following conditions are equivalent (the result is due to Cisinski; see \cite[Thm.~7.2.7]{radulescu-banu:cofibrations}):
 \begin{enumerate}
  \item $\cW$ satisfies 2-out-of-6;
  \item $\cW$ satisfies 2-out-of-3 and is {\em saturated}; that is, if a map $w$ of $\C$ becomes an isomorphism in $\Ho (\C)$, then $w \in \cW$.
 \end{enumerate}
\end{remark}

In this section we show that any categorical model of \FML\ (in the sense of Section~\ref{sec:semantics}) carries the structure of a fibration category; and so, in particular, the syntactic category $\SynCat{\FML}$ does.  From here on, fix some categorical model $\C$ of \FML.

For convenience of exposition, we also assume in this section strong $\eta$-rules for $\Sigma$-types, so that every context is isomorphic to (a context consisting of just) a single iterated $\Sigma$-type: for instance,
\[ [ x \of A,\ y \of B(x) ] \iso [ p : \sum_{x \of A} B(x) ]. \]
This allows us to work just with types, rather than with general contexts.  However, nothing here depends on that assumption; one may simply replace types with contexts and $\Sigma$-types with context extensions, in particular in the definition of the fibrations:

\begin{definition}[Gambino--Garner \cite{gambino-garner}]
A map of $\C$ is a \emph{fibration} if it is isomorphic to some composite of first projections from $\Sigma$-types,
\[\sum_{x \of A} B(x) \rightarrow A.\]
Denote the class of fibrations by $\cF$.
\end{definition}

(This is a slight simplification of Gambino and Garner’s original definition, which also closes $\cF$ under retracts.)  Note that “isomorphic” here refers to the external notion of isomorphism in $\C$, involving definitional equality of maps; and so one cannot represent this definition internally in the type theory, since definitional equality is not represented by a type.  Indeed, there is no way of defining these fibrations internally: every statement of the type theory respects equivalence, and we see in Lemma~\ref{lem:factorization} below that every map is equivalent to a fibration.

Weak equivalences, by contrast, are defined first internally, as in Section~\ref{sec:hott-background} above:
\begin{definition}[Voevodsky]
 \begin{internal}
A map $f \colon A \to B$ is an \emph{equivalence} if for each $b : B$ the homotopy fiber $\hfib (f, b)$ is contractible.

(Note that this is simply a property of $f$, not extra structure, since being an equivalence is a proposition in the sense of Section~\ref{sec:equiv-and-trunc}.)
\end{internal}

Take a map $f \colon A \to B$ in $\C$ to be in $\cW$ if “\textsf{$(\lambda x.\ f(x))$ is an equivalence}” holds in $\C$.
\end{definition}

With these definitions, we are now ready for the main theorem of the section:

\begin{theorem}\label{thm:syn-is-fib-cat}
 $\C$, with $\cW$ and $\cF$ as described above, is a fibration category.
\end{theorem}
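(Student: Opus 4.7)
My plan is to verify each of the seven axioms in turn, leaning on the internal reasoning in \FML\ that was assembled in Section~\ref{sec:hott-background}, and then translating each such internal statement to an external statement in $\C$ via the canonical interpretation.

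First, I would dispense with the easy structural axioms. Axioms (2) and (7)---closure of $\cF$ under composition, and fibrancy of every object---are essentially immediate: a composite of (iterated) $\Sigma$-projections is again such a composite, and any object $A$ is the target of the empty composite, while the map to the terminal object $\one$ (axiom (4), given by the empty context, or by $\Sigma_{x:\one}\one$) is realized as a $\Sigma$-projection out of $A$ itself. For axiom (3), I would check internally that the identity map is an equivalence, hence every isomorphism is in $\cW$; and since any isomorphism is, by definition, isomorphic to the identity, which is the empty composite of $\Sigma$-projections, isomorphisms lie in $\cF$ as well.

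Next, axiom (5): given a fibration $p \colon \sum_{x\of A} B(x) \to A$ and any map $f \colon A' \to A$, the pullback is constructed internally as $\sum_{x'\of A'} B(f(x'))$, which is evidently another $\Sigma$-projection, so $\cF$ is stable under pullback. To show stability of acyclic fibrations, I would prove internally that if $B \fib A$ has contractible fibres then so does any pulled-back family, then interpret. For axiom (6), the path object of $A$ is taken to be $PA := \sum_{x,y \of A} \Id{x}{y}$; the projection $PA \to A \times A$ is a $\Sigma$-projection and so a fibration, and the map $A \to PA$ sending $x \mapsto (x,x,\refl(x))$ is shown internally to be an equivalence by exhibiting its fibre over $(x,y,p)$ and contracting using path-induction on $p$.

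Finally, axiom (1), the 2-out-of-6 property, is the step I expect to require the most care. I would prove it entirely internally: given $f, g, h$ with $g \circ f$ and $h \circ g$ equivalences, first derive two-sided homotopy-inverses for $g \circ f$ and $h \circ g$ (using the equivalence between the contractibility-of-fibres definition and the bi-invertibility definition, as recorded in the HoTT library), then assemble from these a homotopy-inverse for $g$, from which $f \equiv (g\circ f)^{-1}\circ g$ and $h \equiv g\circ(h\circ g)^{-1}$ are seen as composites of equivalences, and hence so is $h\circ g\circ f$. The bookkeeping of higher coherences is the delicate part; once it is done, the external statement follows because $\lambda x.\,(g\circ f)(x)$, $\lambda x.\,(h\circ g)(x)$ being equivalences in $\C$ corresponds exactly to the internal hypothesis, and the conclusion transports back to $\C$ the same way. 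With all seven axioms verified, the theorem follows.
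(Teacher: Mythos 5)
Your proposal is correct and follows essentially the same route as the paper: the paper likewise verifies the axioms one by one, using the same ingredients---strict pullbacks of $\Sigma$-projections by substitution ($\sum_{x'\of A'}B(f(x'))$), fibrewise contractibility via the equivalence $B(a)\equiv\hfib(\pi_1,a)$ for acyclicity, the path object $\P A=\sum_{x,y\of A}\Id{x}{y}$, and explicit composite quasi-inverses (e.g.\ $\inv{(g\cdot f)}\cdot g\cdot\inv{(h\cdot g)}$) for 2-out-of-6. The one point where your wording needs care is axiom (5): you say the pullback of a fibration is ``constructed internally,'' but the existence of this pullback is an \emph{external}, strict statement about $\C$---the square commutes and is universal up to definitional equality, which cannot be expressed as a type---so the verification must be carried out at the level of the contextual-category structure (substitution into dependent types), with only the acyclicity part delegated to internal reasoning about contractible fibres. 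With that distinction made explicit, your argument matches the paper's.
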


We consider the various axioms in turn.

\begin{lemma}
$\cW$ satisfies the 2-out-of-6 property.
\end{lemma}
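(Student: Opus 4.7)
The plan is to prove the 2-out-of-6 property internally, i.e., as a statement about equivalences of types in \FML, and then transfer it to the external statement about $\cW$ via the definition of $\cW$. Since $\cW$ is defined by requiring that an internal proposition (“being an equivalence”) holds, and since composition in $\C$ corresponds to function composition internally, it suffices to establish, for types $A, B, C, D$ and maps $f \colon A \to B$, $g \colon B \to C$, $h \colon C \to D$ in the theory, that if $g \circ f$ and $h \circ g$ are equivalences then so are each of $f$, $g$, $h$, and $h \circ g \circ f$.

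The key internal step is showing that $g$ is an equivalence; the rest then falls out easily. The plan is to use the basic fact from the HoTT library (recalled in Section~\ref{sec:equiv-and-trunc}) that a map is an equivalence iff it has both a left and a right homotopy inverse. Let $u$ be a quasi-inverse to $g \circ f$, so that $u \circ g \circ f \homot \syn{id}_A$ and $g \circ f \circ u \homot \syn{id}_C$; and let $v$ be a quasi-inverse to $h \circ g$, giving $v \circ h \circ g \homot \syn{id}_B$ and $h \circ g \circ v \homot \syn{id}_D$. Then $f \circ u$ is a right homotopy inverse for $g$ (since $g \circ f \circ u \homot \syn{id}_C$), and $v \circ h$ is a left homotopy inverse for $g$ (since $v \circ h \circ g \homot \syn{id}_B$). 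Hence $g$ is an equivalence.

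Once $g$ is known to be an equivalence, we may take an honest quasi-inverse $g^{-1}$, and then $f \homot g^{-1} \circ (g \circ f)$ and $h \homot (h \circ g) \circ g^{-1}$ exhibit $f$ and $h$ as composites of equivalences; closure of equivalences under composition (another library fact) then gives that $f$, $h$, and finally $h \circ g \circ f$ are all equivalences. Transferring this internal result back to $\C$ gives the axiom for $\cW$, since the hypotheses and conclusions are all of the form “$(\lambda x.\ k(x))$ is an equivalence” for various composites $k$, and composition of morphisms in $\C$ matches function composition under the interpretation.

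I do not expect a serious obstacle here; the argument is almost entirely book-keeping with known facts about equivalences. The one point requiring mild care is the internal/external shift: one must know that the categorical composite in $\C$ corresponds, under the interpretation, to $\lambda x.\ g(f(x))$, so that “$g \circ f \in \cW$” is the same as the internal proposition “$\lambda x.\ g(f(x))$ is an equivalence.” Given this, the argument reduces to the internal lemma above, which is entirely routine.
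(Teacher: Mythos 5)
Your proposal is correct and follows essentially the same route as the paper: prove the 2-out-of-6 statement internally, with the key step being that $f\cdot\inv{(g\cdot f)}$ and $\inv{(h\cdot g)}\cdot h$ give right and left homotopy inverses for $g$, and then transfer to $\cW$ using the agreement of internal and external composition. The only (cosmetic) difference is that the paper writes down explicit quasi-inverses for $f$, $h$, and $h\cdot g\cdot f$ directly, whereas you deduce them from closure of equivalences under composition once $g$ is known to be an equivalence.
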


\begin{proof}
We first show the analogous statement internally (Lemmas \lstinline{two_of_six_hgf}, \lstinline{two_of_six_h}, \lstinline{two_of_six_g}, and \lstinline{two_of_six_f} in the formalization).

\begin{internal}
Let $f$, $g$, $h$ be composable maps, and suppose $f \cdot g$ and $g \cdot h$ are equivalences.  Then:
\begin{itemize}
\item $\inv{(g\cdot f)} \cdot g \cdot \inv{(h \cdot g)}$ gives a quasi-inverse for $h \cdot g \cdot f$;
\item $\inv{(h \cdot g)} \cdot h$ and $f \cdot \inv{(g \cdot f)}$ give left and right inverses for $g$;
\item $\inv{(g \cdot f)} \cdot g$ gives a quasi-inverse for $f$;
\item $g \cdot \inv{(h \cdot g)}$ gives a quasi-inverse for $h$.
\end{itemize}
\end{internal}

This immediately implies the desired external statement, since internal and external composition agree.
\end{proof}

\begin{lemma}\label{lem:pb-of-fib}
Pullbacks of fibrations exist.
\end{lemma}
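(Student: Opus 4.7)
The plan is to reduce to the case of a single first projection and invoke the strict substitution structure that comes with any categorical model of a type theory with $\Sigma$-types.

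First I would observe that since a fibration is by definition \emph{isomorphic} to some \emph{composite} of first projections $\pi_B \colon \sum_{x \of A} B(x) \to A$, two routine reductions bring the problem down to the case of a single such projection. Pullbacks compose (vertically pasted pullback squares are again pullback squares), so if pullbacks exist along each factor of a composite $p_n \cdots p_1$, they exist along the composite. Pullbacks also transfer along isomorphisms: if $p$ is isomorphic (via $\alpha,\beta$) to a map $p'$ that has pullbacks, then the pullback of $p$ along $f$ is obtained from the pullback of $p'$ along $f$ by conjugating with $\alpha, \beta$. So it suffices to construct a pullback of a single first projection $\pi_B \colon \sum_{x \of A} B(x) \to A$ along an arbitrary map $f \colon X \to A$.

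For this basic case I would appeal directly to the contextual category structure of $\C$. Among the algebraic data making $\C$ a categorical model of $\FML$ is a substitution / reindexing operation: for any type $B$ in context $A$ and any map $f \colon X \to A$, there is a type $f^* B$ in context $X$, a canonical map $q(f,B) \colon \sum_{x \of X} f^* B(x) \to \sum_{x \of A} B(x)$ (corresponding on elements to $(x,b) \mapsto (f(x), b)$), and, crucially, the square
\[
\begin{array}{ccc}
\sum_{x \of X} f^* B(x) & \xrightarrow{q(f,B)} & \sum_{x \of A} B(x) \\
\downarrow {\pi_{f^*B}} & & \downarrow {\pi_B} \\
X & \xrightarrow{f} & A
\end{array}
\]
commutes \emph{strictly} and is, by axiom, a pullback in $\C$. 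The left-hand vertical arrow is again a first projection, so the pullback of a fibration (at least in the basic case) is itself a fibration; this will be useful for the stability claim in the next lemma.

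The main obstacle, such as it is, is purely organizational: keeping straight the distinction between \textsf{internal} statements (the homotopy-theoretic universal property one would phrase in \FML) and the \emph{external} statement we actually need in $\C$ (a strict 1-categorical pullback), and handling the composite-of-projections and up-to-isomorphism clauses in the definition of fibration without confusion. There is no genuine homotopical content here; the lemma is essentially a restatement of the fact that substitution in a categorical model of $\FML$ supplies on the nose the required strict pullbacks for the generating fibrations.
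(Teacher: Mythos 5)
Your proposal is correct and follows essentially the same route as the paper: the pullback of a single dependent projection is given by substitution (i.e.\ the contextual-category reindexing, yielding $\sum_{x : A'} B(fx) \to A'$ as a strict pullback), and composites are handled by pasting pullback squares. Your extra care with the ``isomorphic to a composite'' clause in the definition of fibration is a reasonable elaboration of a step the paper leaves implicit, not a different argument.
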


\begin{proof}
The pullback of a dependent projection is given by substituting into the corresponding dependent type; that is, the following square is a pullback:
\[\xymatrix{
\sum\limits_{x : A'} B(fx) \ar[r] \ar[d] & \sum\limits_{x : A} B(x) \ar[d] \\
A' \ar[r]^f & A.
}\]

The two pullbacks lemma implies that pullbacks of their composites then also exist.
\end{proof}

Note that this is an external statement: these really are strict pullbacks, in contrast to the internally defined pullbacks of Section~\ref{sec:pullbacks}, which from an external point of view become homotopy pullbacks.

\SaveVerb{fiber_to_hfiber_equiv}|fiber_to_hfiber_equiv|
\begin{lemma}[\UseVerb{fiber_to_hfiber_equiv}]\label{lem:fib-ho-fib}
\begin{internal}Let $\pi_1 \colon \sum_{x : A} B(x) \to A$ be a fibration.  Then for any $a : A$, we have $B(a) \equiv \hfib(\pi_1, a)$.\end{internal}
\end{lemma}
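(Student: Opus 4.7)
The plan is to work internally, unfold the definitions on both sides, and exhibit an explicit quasi-inverse pair of maps, verifying the required homotopies by path induction. Writing $E := \sum_{x:A} B(x)$, we have by definition
\[
 \hfib(\pi_1, a) \;=\; \sum_{p : E} \Id{\pi_1 p}{a} \;\iso\; \sum_{x:A} \sum_{b:B(x)} \Id{x}{a},
\]
using only the definitional rule $\pi_1(x,b) \equiv x$ for first projections.

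The equivalence then proceeds by constructing:
\begin{itemize}
 \item $\phi \colon B(a) \to \hfib(\pi_1, a)$, sending $b \mapsto ((a,b),\, \refl(a))$;
 \item $\psi \colon \hfib(\pi_1, a) \to B(a)$, sending $((x,b), p) \mapsto \pinv{p}_!(b)$, i.e.\ transporting $b : B(x)$ along $\pinv{p} : \Id{a}{x}$ reversed appropriately.
\end{itemize}
One composite is immediate: $\psi(\phi(b)) = \refl(a)_!(b) \equiv b$. The other composite requires showing $\phi(\psi((x,b),p)) = ((x,b),p)$; this is proved by path induction on $p : \Id{x}{a}$, which reduces the goal to the case $x \equiv a$ and $p \equiv \refl(a)$, where both sides become $((a,b), \refl(a))$.

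A slicker alternative, which is often what is formalized in practice, is to reassociate and use contractibility of based path spaces: rewriting
\[
 \sum_{x:A} \sum_{b:B(x)} \Id{x}{a} \;\equiv\; \sum_{(x,p) : \sum_{x:A} \Id{x}{a}} B(x),
\]
one invokes the fact that the total space $\sum_{x:A} \Id{x}{a}$ is contractible (with centre $(a, \refl(a))$), together with the lemma that projecting out a contractible component from a $\Sigma$-type yields an equivalence with the fibre over that centre, namely $B(a)$. The main obstacle in either route is bookkeeping of the transport and $\Sigma$-reassociation lemmas from the HoTT library, rather than any conceptual difficulty; all the content is absorbed into path induction on the identity path component.
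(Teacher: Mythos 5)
Your proposal is correct and takes essentially the same route as the paper: the same map $b \mapsto ((a,b),\refl(a))$ and the same transport-based inverse, with the round-trip identities checked by computation and path induction (the paper simply calls this verification ``straightforward''), and your alternative via contractibility of $\sum_{x:A}\Id{x}{a}$ is an equally standard variant. One notational slip: under the paper's convention that $p : \Id{x}{a}$ induces $p_! \colon B(x) \to B(a)$, the inverse map should send $((x,b),p)$ to $p_!(b)$, transporting along $p$ itself rather than $\pinv{p}$ (which would go the wrong way and not typecheck).
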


\begin{proof} \begin{internal}
Take any $a : A$. For the map $B(a) \to \hfib(\pi_1, a)$, send $b : B(a)$ to $((a,b), \refl(a))$. Conversely, send $((a',b), p) : \hfib(\pi_1, a)$ (where $b : B(a')$ and $p : \Id{a'}{a}$) to the transported element $p_! (b) : B(a)$. The verification that these are mutually inverse is straightforward.
\end{internal} \end{proof}

\begin{lemma}
Fibrations and acyclic fibrations are preserved under pullback.
\end{lemma}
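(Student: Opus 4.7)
The plan is to handle the two cases separately, treating the fibration case as essentially a corollary of Lemma~\ref{lem:pb-of-fib} and the acyclic fibration case via Lemma~\ref{lem:fib-ho-fib} plus a short internal argument.

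For the fibration case, recall that a fibration is (up to isomorphism) a finite composite of dependent projections $\pi_1 \colon \sum_{x:A} B(x) \to A$. The pullback square exhibited in the proof of Lemma~\ref{lem:pb-of-fib} shows that the pullback of such a $\pi_1$ along any $f \colon A' \to A$ is again a dependent projection, namely $\sum_{x:A'} B(fx) \to A'$, obtained by substituting $f$ into the family $B$. For a general fibration $p = \pi_1^{(1)} \cdot \pi_1^{(2)} \cdots \pi_1^{(n)}$, one pulls back one projection at a time, using the two-pullbacks lemma (the same gadget already invoked in Lemma~\ref{lem:pb-of-fib}) to assemble the pullback of the composite from the successive pullbacks of each factor. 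Each of these successive pullbacks is again a dependent projection, so the resulting pullback of $p$ is a composite of dependent projections, hence a fibration.

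For the acyclic case, suppose $p \colon Y \to X$ is an acyclic fibration and $f \colon X' \to X$ is arbitrary; write $f^*p \colon f^*Y \to X'$ for the pullback fibration. By the first part, $f^* p$ is a fibration, so it suffices to show it is a weak equivalence. By taking a chain of pullbacks along each projection in turn, we may reduce to the case where $p$ itself is a single dependent projection $\pi_1 \colon \sum_{x:A} B(x) \to A$; then $f^* p$ is $\pi_1 \colon \sum_{x':A'} B(fx') \to A'$. Now we argue internally:

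\begin{internal}
By Lemma~\ref{lem:fib-ho-fib}, for any $a : A$ we have $\hfib(\pi_1, a) \simeq B(a)$, and similarly for $a' : A'$ we have $\hfib(f^*\pi_1, a') \simeq B(fa')$. Since $\pi_1$ is an equivalence, $\hfib(\pi_1, a)$ is contractible for every $a$; so $B(a)$ is contractible for every $a$, and in particular $B(fa')$ is contractible for every $a' : A'$. Hence $\hfib(f^*\pi_1, a')$ is contractible for every $a'$, i.e.\ $f^*\pi_1$ is an equivalence.
\end{internal}

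Interpreting this derivation in $\C$ and applied to the relevant objects yields that $f^*p \in \cW$, as required.

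The only mildly delicate point is the reduction from a general fibration to a single dependent projection in the acyclic case: one must check that if $p = q \cdot r$ is a composite and the total pullback is an equivalence at each stage, then the whole pullback is an equivalence. This is immediate once the composite is written as an iterated pullback (the two-pullbacks lemma again), since equivalences are closed under composition (a consequence of the 2-out-of-6 property already established).
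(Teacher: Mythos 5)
Your core argument is the paper's: the fibration case is read off from the explicit pullback square of Lemma~\ref{lem:pb-of-fib}, and the acyclic case is the internal chain $\hfib(f^*\pi_1,a') \equiv B(f a') \equiv \hfib(\pi_1,f a')$ from Lemma~\ref{lem:fib-ho-fib}, plus preservation of contractibility under equivalence. That part is correct and matches the formalization (\lstinline{str_pullback_pres_acyclic_fib}).

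The step that does not work as written is your reduction of a general acyclic fibration to a single dependent projection. You propose to pull back one projection at a time and then appeal to ``if $p = q \cdot r$ is a composite and the total pullback is an equivalence at each stage, then the whole pullback is an equivalence.'' The inference itself is fine (equivalences compose), but you have no way to establish its antecedent: the hypothesis is only that the \emph{composite} $p$ is a weak equivalence, and the individual factors $q$ and $r$ of an acyclic fibration need not themselves be acyclic, so the single-projection case cannot be applied to each factor separately. The reduction the paper actually relies on is the standing convention from the start of Section~\ref{fibcats:section}: with strong $\eta$ for $\Sigma$-types, an iterated composite of dependent projections is isomorphic to a single projection $\sum_{x:A}\bigl(\sum_{y:B(x)} C(x,y)\bigr) \to A$, so one may assume from the outset that the acyclic fibration is a single projection. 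With that substitution in place of your stage-by-stage induction, your proof is complete and coincides with the paper's.
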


\begin{proof}
Preservation of fibrations is clear by construction from the proof of Lemma~\ref{lem:pb-of-fib}.  For acyclicity, suppose $\pi = \pi_1 \colon \sum_{x : A} B(x) \to A$ is an acyclic fibration, and $f \colon A' \to A$ is a map. Write $f^*\pi$ for the pullback fibration $\pi_1 \colon \sum_{x : A'} B(f(x)) \to A'$.
\begin{internal}Then for any $x:A'$,
\[ \hfib(f^*\pi,x) \equiv B(f(x)) \equiv \hfib(\pi,f(x)) \]
by Lemma~\ref{lem:fib-ho-fib}; and $\hfib(\pi,f(x))$ is contractible by hypothesis, so since equivalence preserves contractibility, $\hfib(f^*\pi,x)$ is again contractible.\end{internal}
So $f^*\pi$ is again acyclic, as required.
\end{proof}

Lemma \lstinline{str_pullback_pres_acyclic_fib} provides the internal part of this argument.


\begin{definition}
The \emph{path type} of a type $A \in \C$ is constructed from its identity types:
 \[ \P A := \sum\limits_{x, y : A} \Id{x}{y} \]
It is equipped by construction with a fibration $\pi$ to $A \times A$, and there is also a natural map $r \colon A \to \P A$ sending $a$ to $((a,a),r(a))$.  Moreover, the map $\P A \to A$ sending $((a,a'),p)$ to $a$ (or to $a'$) gives a quasi-inverse for $r$; so together, we have a factorization of the diagonal of $A$ as a weak equivalence followed by a fibration:
 \[ \xymatrix{ & \P A \ar[rd]^{\pi} & \\
  A \ar[rr]_{\Delta} \ar[ru]^{r} & & A \times A}\]
\end{definition}

We have now amassed all the ingredients of a fibration category:

\begin{proof}[Proof of Theorem~\ref{thm:syn-is-fib-cat}]
Immediate from the preceding lemmas.
\end{proof}

Besides the basic structure, we consider how a few more useful constructions from the theory of fibration categories play out in $\C$:

\begin{lemma}[Factorization Lemma, {\cite[Lem.~11]{gambino-garner}}] \label{lem:factorization}
 For every morphism $f \colon A \to B$ in $\C$, there exists a factorization:
      \[ \xymatrix{ & \P f \ar[rd]^{p_f} & \\
  A \ar[rr]_{f} \ar[ru]^{\sigma_f} & & B}\]
with $\sigma_f \in \cW$ and $p_f \in \cF$.
\end{lemma}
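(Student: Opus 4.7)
The plan is to use the standard mapping path space construction. Define
\[ \P f := \sum_{a : A,\ b : B} \Id{f(a)}{b}, \]
with $\sigma_f : A \to \P f$ sending $a \mapsto (a, f(a), \refl(f(a)))$ and $p_f : \P f \to B$ sending $(a,b,p) \mapsto b$. The required equation $p_f \cdot \sigma_f = f$ then holds definitionally.

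To see that $p_f$ is a fibration, I would reassociate the $\Sigma$-type externally: $\P f$ is isomorphic in $\C$ to $\sum_{b:B} \sum_{a:A} \Id{f(a)}{b}$, under which $p_f$ becomes the first projection, hence a fibration by definition. (Strictly, this relies on the strong $\eta$-rules for $\Sigma$ assumed in this section; without them, one works with the corresponding context, and the same reassociation argument goes through.)

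For the weak equivalence part I would argue internally. Consider the other projection $q : \P f \to A$ sending $(a,b,p) \mapsto a$. Its fiber over $a$ is $\sum_{b:B} \Id{f(a)}{b}$, the based path space at $f(a)$, which is contractible by the standard HoTT argument (center $(f(a),\refl(f(a)))$, contraction via path induction). Since $q$ is a fibration, Lemma~\ref{lem:fib-ho-fib} identifies each strict fiber with the corresponding homotopy fiber, so $q$ is an equivalence. Now $q \cdot \sigma_f = \syn{id}_A$ definitionally, so $\sigma_f$ is a section of an equivalence; applying 2-out-of-3 (a consequence of 2-out-of-6 already proved) to the triangle $\sigma_f,\ q,\ \syn{id}_A$ shows $\sigma_f \in \cW$.

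The main obstacle is conceptual rather than technical: keeping the external viewpoint (strict isomorphism of objects of $\C$, needed to establish the fibration property as a literal $\Sigma$-projection) separate from the internal one (contractibility of singletons and the equivalence argument). Once that separation is handled cleanly---as in the earlier lemmas of this section---every step is either a definitional equality or a direct appeal to a result already in place.
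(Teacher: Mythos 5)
Your proposal is correct and matches the paper's proof, which gives the same mapping path space construction (writing $\P f := \sum_{y : B,\, x : A} \Id{f x}{y}$ with the $B$-component first so that $p_f$ is literally a $\Sigma$-projection, avoiding your external reassociation step) and then simply asserts that ``it is easy to check that $\sigma_f \in \cW$.'' Your verification of that last claim---contractibility of the based path spaces making the retraction $q$ an equivalence, then 2-out-of-3---is exactly the standard argument being elided.
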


\begin{proof}
 We take
 \[ \P f := \sum_{y : B, x : A} \Id{f x}{y}.\]
 and
 \[ \sigma_f (x) := (f x, x, \refl (f x)).\]
By definition, $p_f$ is in $\cF$; and it is easy to check that $\sigma_f \in \cW$.
\end{proof}

$(\cW,\cF)$ factorizations may be constructed in this way in any fibration category.  In the type-theoretic case, however, they crucially satisfy an additional property, corresponding to the $\synId$-elimination rule: $\sigma_f$ is weakly left-orthogonal to fibrations, and so fibrations form the right class of a weak factorization system.  We will not however go into this point here; see \cite{gambino-garner} for details.

\SaveVerb{right_properness}|right_properness|
\begin{lemma}[\UseVerb{right_properness}] \label{lem:right-proper}
The pullback of a weak equivalence along a fibration is again a weak equivalence:
      \[ \xymatrix{ \pi^*C \ar[r]^-{\pi^* f} \ar[d] & \sum_{x : A} B(x) \ar[d]^{\pi} \\
      A' \ar[r]_-{f \in \cW}  & A}\]
\end{lemma}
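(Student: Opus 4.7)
My plan is to work internally in the type theory and show that the homotopy fiber of $\pi^*f$ at each point is contractible; we can then conclude externally that $\pi^*f \in \cW$.

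First, I would reduce to the case where $\pi$ is a single $\Sigma$-projection $\pi_1 \colon \sum_{x : A} B(x) \to A$. Since every fibration is isomorphic to a composite of such projections, and pullbacks can be pasted, this reduction follows from the preservation of weak equivalences under composition (a consequence of 2-out-of-6) together with the fact that isomorphisms are trivially weak equivalences. So assume $\pi = \pi_1$, and let $\pi^*f \colon \sum_{x : A'} B(fx) \to \sum_{x : A} B(x)$ be the induced pullback map, sending $(x,b)$ to $(fx, b)$.

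Next, working internally, I would unfold the homotopy fiber of $\pi^*f$ over an arbitrary $(a,b)$ and manipulate it using path induction on $\Sigma$-types. By the standard decomposition of paths in $\Sigma$-types recalled in Section~\ref{sec:hott-background}, a path $\Id{(fx', b')}{(a,b)}$ is equivalently a pair $(p, q)$ with $p : \Id{fx'}{a}$ and $q : \Id{p_! b'}{b}$. Reordering the $\Sigma$'s gives
\[
\hfib(\pi^*f, (a,b)) \;\equiv\; \sum_{x' : A'}\sum_{b' : B(fx')}\sum_{p : \Id{fx'}{a}} \Id{p_! b'}{b} \;\equiv\; \sum_{(x',p) : \hfib(f,a)}\; \sum_{b' : B(fx')} \Id{p_! b'}{b}.
\]
For each fixed $(x',p)$, the transport map $p_! \colon B(fx') \to B(a)$ is an equivalence (with quasi-inverse $\pinv{p}_!$), so $\sum_{b' : B(fx')} \Id{p_! b'}{b} \equiv \hfib(p_!, b)$ is contractible. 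Hence the total space is equivalent to $\hfib(f,a)$, which is contractible by the hypothesis that $f$ is a weak equivalence. Contractibility is preserved under equivalence, so $\hfib(\pi^*f, (a,b))$ is contractible.

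The main obstacle is just the bookkeeping of $\Sigma$-type manipulations and transports; no genuinely hard step is involved, and the argument is essentially dual to the proof of Lemma~\ref{lem:fib-ho-fib}. Translating back externally, this proves $\pi^*f \in \cW$.
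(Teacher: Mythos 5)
Your proof is correct, but it takes a genuinely different route from the paper's. The paper works with the quasi-inverse characterization of equivalences: given a quasi-inverse $(g,\eta,\epsilon)$ for $f$, it exhibits a quasi-inverse for $\pi^*f$ directly, namely $(x,b) \mapsto (g(x), \pinv{\eta(x)}_!\, b)$, and is done in one line (modulo the background fact, quoted in Section~\ref{sec:equiv-and-trunc}, that having a two-sided inverse is equivalent to being an equivalence). You instead work with the definition of $\cW$ as fiberwise contractibility and compute $\hfib(\pi^*f,(a,b)) \equiv \hfib(f,a)$ by decomposing paths in the $\Sigma$-type, reordering, and contracting $\sum_{b'} \Id{p_!\,b'}{b} \equiv \hfib(p_!,b)$. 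This is the strict-pullback analogue of Proposition~\ref{prop:hfiber:pullback}, and it buys you a stronger intermediate fact (the identification of homotopy fibers) at the cost of more $\Sigma$-type and transport bookkeeping; the paper's route is shorter and more directly computational but leans on the quasi-inverse lemma. Two small remarks: your preliminary reduction to a single $\Sigma$-projection is harmless but unnecessary for the statement as displayed (the diagram already takes $\pi$ to be a single projection), and the reduction for composites is really an iteration of the single-projection case via pasting of pullbacks rather than an appeal to closure of $\cW$ under composition --- you never need to compose two weak equivalences there.
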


\begin{proof} \begin{internal}
The map $\pi^*f$ sends a pair $(y,b)$ to $(f(y), b)$; so taking a quasi-inverse $(g,\eta,\epsilon)$ for $f$, we can construct a quasi-inverse for $\pi^*f$ by sending $(x,b)$ to $(g(x), \pinv{\eta(x)}_! b )$.
\end{internal} \end{proof}

(Again, this lemma holds in any fibration category.)

One may also define cofibrancy, for objects of any fibration category:
\begin{definition}
 An object $C$ of a fibration category $\C$ is \emph{cofibrant} if for any acyclic fibration $p \colon B \to A$ and map $f \colon C \to A$, there is some lifting $\bar{f} \colon C \to B$:
 \[\xymatrix{ & B \ar[d]^p \\
 C \ar@/^/@{-->}[ru]^{\bar{f}} \ar[r]^f & A.}\]
\end{definition}

When $\C$ is a categorical model of \FML\ , we have:
\begin{lemma}
 Every object of $\C$ is cofibrant.
\end{lemma}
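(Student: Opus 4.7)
The plan is to exhibit the required lift by building an honest section of the acyclic fibration $p$, using internal contractibility of its fibers, and then precomposing with $f$.

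First I would reduce to the canonical case: by definition of $\cF$, the acyclic fibration $p \colon B \to A$ is isomorphic (in $\C$, strictly) to a composite of first projections from $\Sigma$-types, which, using associativity of $\Sigma$ up to isomorphism, can be taken to be a single projection $\pi_1 \colon \sum_{x : A} D(x) \to A$ for some type family $D$. Since lifting problems transport along isomorphisms of the target, it suffices to produce a section of $\pi_1$ in $\C$; then composing with $f$ and conjugating by the isomorphism yields $\bar f$ with $p \bar f = f$ strictly.

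Next, I would move inside the theory. Since $p$ is acyclic, so is $\pi_1$, and hence for each $a : A$, the fiber $\hfib(\pi_1, a)$ is contractible. By Lemma~\ref{lem:fib-ho-fib} we have $D(a) \equiv \hfib(\pi_1, a)$, and contractibility is preserved under equivalence, so internally we obtain
\[ \prod_{a : A} \isContr(D(a)). \]
Taking the first projection of the $\isContr$ data gives an internal term $c \colon \prod_{a : A} D(a)$, i.e., a section of $\pi_1$: externally, this is a morphism $s \colon A \to \sum_{x : A} D(x)$ in $\C$ with $\pi_1 \cdot s = \mathrm{id}_A$ by the $\beta$-rule (and $\eta$-rule) for $\Sigma$-types. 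Setting $\bar f := s \cdot f$ (composed back with the isomorphism identifying $\pi_1$ with $p$) gives the required lift, since $p \cdot \bar f = \pi_1 \cdot s \cdot f = f$.

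The main subtle point is not any homotopical construction---each fiber being contractible almost trivially yields a section---but rather the bookkeeping between the external and internal levels: one has to be sure that internal contractibility really delivers a strict section in $\C$ (this uses that the structure of contextual categories lets us interpret terms of $\prod_{a:A} D(a)$ as genuine morphisms $A \to \sum_{a:A} D(a)$ over $A$), and that the strict equation $\pi_1 \cdot s = \mathrm{id}_A$ survives the passage through the isomorphism $p \cong \pi_1$ and through the reassociation of iterated $\Sigma$-projections. Once those identifications are made, the lift is immediate and requires no diagram chase.
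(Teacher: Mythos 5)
Your proof is correct and follows essentially the same route as the paper: use Lemma~\ref{lem:fib-ho-fib} to transfer contractibility of the homotopy fibers to the fibers $B(x)$, take centers of contraction to obtain a strict section of the acyclic fibration, and precompose with $f$. The extra bookkeeping you supply about reducing to a single $\Sigma$-projection and reading an internal term of $\prod_{a:A}B(a)$ as an external section is left implicit in the paper but is exactly what its argument relies on.
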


\begin{proof}
  Lemma~\ref{lem:fib-ho-fib} implies that every acyclic fibration $\pi_1 \colon \sum_{x:A} B(x) \to A$ admits some section: take some family of contractions of the fibers $\hfib(\pi_1,x)$, and send $x:A$ to the image of the center of contraction $\ast_x : \hfib(\pi_1,x)$ under the equivalence $\hfib(\pi_1,x) \equiv B(x)$.  Now, given $f$ as above, take $\bar{f}$ to be the composite of $f$ with this section.
\end{proof}

We conclude with a somewhat subtler question.  Another condition often assumed for fibration categories is that for any $\N$-indexed sequence
\[ \xymatrix{ A_0 & \ar[l]_{f_0} A_1 & \ar[l]_{f_1} A_2 & \ar[l]_{f_2} \cdots\ ,} \]
if each $f_i$ is a fibration then the sequence has a limit, and moreover the projections from this limit are again fibrations.

This turns out not to be provable in the type theory---in particular, it fails in the syntactic category $\SynCat{\FML}$.  However, appropriate internally-formulated versions of it do hold; this is analogous to the fact that an elementary topos may fail to be externally complete, while possessing all limits in the internal sense.

To see how it fails in $\SynCat{\FML}$, consider the sequence of projections
\[ \xymatrix{ 1 & \ar[l]_{f_0} \synNat & \ar[l]_{f_1} \synNat^2 & \ar[l]_{f_2} \cdots\ } \]
This sequence cannot have a limit, since such a limit would be a $\N$-fold product of copies of $\synNat$, and as such would necessarily have uncountably many global elements, while $\SynCat{\FML}$ is countable.

However, an \emph{internal} limit for the sequence exists, in the form of the object $\synNat^\synNat$ (working internally, it does not make sense to ask whether the projections are fibrations); and, in some models (e.g.\ the simplicial model) this object turns out to be interpreted as the external limit $\prod_{\N} \synNat$.


\section{Limits and applications}
\label{sec:further-development}


\subsection{Pullbacks and equalizers}
\label{sec:pullbacks}

Before defining general limits over graphs, we start by investigating pullbacks; these serve both as a warmup and as a useful tool for subsequent material.

\subsubsection{The standard construction of a pullback}

We start by explicitly constructing the pullback of a cospan. The definitions and theorems in this section are found in \lstinline{Pullbacks}.

\SaveVerb{pullback}|pullback|
\begin{definition}[\UseVerb{pullback}]\label{def:pullback}
Let $\xymatrix{A \ar[r]^{f} & C & B \ar[l]_{g}}$ be a cospan of types and functions. The \emph{(standard) pullback} $\Pb (f, g)$ of this cospan is defined as:
 \[ \Pb(f,g) := \sum_{x : A,\, y : B} \Id{f x}{g y} \]
with the obvious maps:
     \[ \xymatrix{\Pb(f,g) \ar[r]^-{\pi_B} \ar[d]_{\pi_A} & B \ar[d]^{g} \\
      A \ar[r]_{f} & C}.\]
\end{definition}

(This definition may be recast to parallel a traditional construction of the homotopy pullback in fibration categories \cite[Lem~1.3]{brown:abstract-homotopy-theory}: first fibrantly replace $f$ by $p_f$ as in Lemma~\ref{lem:factorization}, obtaining
\[ \xymatrix{
& \P f \mathrlap{ {} = \sum_{c : C} \hfib (f,c) = \sum_{c : C, a : A} \Id{fa}{c}} \ar[d]^{p_f}  \\
            A\ar[r]_{f} & C,
} \qquad \qquad \qquad \qquad \qquad \qquad \]
and then secondly, take the strict pullback of $\P f$ along $g$ as a fibration over $C$, obtaining $\sum_{b : B} \hfib (f,g(c)) = \sum_{b : B, a : A} \Id{fa}{gb}$, which is (strictly, externally) isomorphic to $\Pb(f,g)$ as defined above.)

Note that the pullback is symmetric (\lstinline{pullback_symm}): there is an equivalence $\Pb(f, g) \equiv \Pb (g,f)$ commuting appropriately with the projections and canonical homotopies.

Moreover, the construction of the pullback should be functorial in $(f,g)$. This requires a few extra definitions to state:

\SaveVerb{cospan_map}|cospan_map|
\begin{definition}[\UseVerb{cospan_map}] Given two cospans $\xymatrix{A \ar[r]^{f} & C & B \ar[l]_{g}}$ and $\xymatrix{A' \ar[r]^{f'} & C' & B' \ar[l]_{g'}}$, a \emph{cospan map} $h$ from $(f,g)$ to $(f',g')$ consists of maps $h_A, h_B, h_C$ and homotopies $h_f,h_g$:
\[ \begin{tikzpicture}[scale=1.2]
\node (A) at (0,1) {$A$};
\node (B) at (1.5,2.5) {$B$};
\node (C) at (1.5,1) {$C$};
\node (A') at (1,0) {$A'$};
\node (B') at (2.5,1.5) {$B'$};
\node (C') at (2.5,0) {$C'$};
\draw[cd] (A) to node {$f$} (C);
\draw[cd,swap] (B) to node {$g$} (C);
\draw[cd,swap] (A') to node {$f'$} (C');
\draw[cd] (B') to node {$g'$} (C');
\draw[cd,swap] (A) to node {$h_A$} (A');
\draw[cd] (B) to node[closer] {$h_B$} (B');
\draw[cd,auto=false] (C) to node[closer,descr,pos=0.45] {$h_C$} (C');
\draw[homot,bend left=10,swap] (A') to node[pos=0.6,closer] {$h_f$} (C);
\draw[homot,bend right=10] (B') to node[pos=0.6,closer] {$h_g$} (C);
\end{tikzpicture} \]
\end{definition}

There is an identity map from any cospan to itself (\lstinline{cospan_idmap}); also, there is an evident composition of cospan maps (\lstinline{cospan_comp}).

\SaveVerb{pullback_fmap}|pullback_fmap|
\begin{proposition}[\UseVerb{pullback_fmap}]
A map of cospans $h \colon (f,g) \to (f',g')$ induces a map of pullbacks $\Pb(f,g) \to \Pb(f', g')$. Moreover, this preserves composition and identities.
\end{proposition}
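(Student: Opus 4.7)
My plan is to unpack the $\Sigma$-type data of a cospan map and define the induced map on pullbacks explicitly, then verify functoriality by routine path-algebraic calculations. Concretely, given a cospan map $h$ with vertex components $h_A \colon A \to A'$, $h_B \colon B \to B'$, $h_C \colon C \to C'$ and homotopies $h_f \colon f' \circ h_A \homot h_C \circ f$ and $h_g \colon g' \circ h_B \homot h_C \circ g$, I define $\Pb(h) \colon \Pb(f,g) \to \Pb(f',g')$ by
\[ \Pb(h)(a,b,p) := \bigl( h_A(a),\ h_B(b),\ h_f(a) \pdot h_C[p] \pdot \pinv{h_g(b)} \bigr), \]
where $h_C[-]$ is the functorial action of $h_C$ on paths. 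By construction the third component is a well-typed path from $f'(h_A(a))$ to $g'(h_B(b))$.

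For preservation of identities, the identity cospan map on $(f,g)$ has all vertex components equal to the identity function and both homotopies pointwise reflexivity. Substituting into the formula above reduces the third component to $\refl \pdot p \pdot \pinv{\refl}$, which is propositionally equal to $p$ by the groupoid unit and inverse laws. Packaging this componentwise with the characterization of paths in $\Sigma$-types recalled in Section~\ref{sec:hott-background} and function extensionality yields a path from $\Pb$ applied to the identity cospan map to the identity map on $\Pb(f,g)$.

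For preservation of composition, given $h \colon (f,g) \to (f',g')$ and $h' \colon (f',g') \to (f'',g'')$, the triples $\Pb(h' \circ h)(a,b,p)$ and $\Pb(h')(\Pb(h)(a,b,p))$ agree definitionally in their first two components. In the third coordinate, each side is a concatenation of paths built from the homotopies of $h$ and $h'$ together with $h'_C[h_C[p]]$ (respectively $(h'_C \circ h_C)[p]$) and suitable inverses. They are identified using functoriality of the action on paths (in particular the identification $h'_C[h_C[p]] = (h'_C \circ h_C)[p]$ and compatibility with concatenation and inversion), associativity of concatenation, and the definition of the composite cospan map's homotopies; a final application of the $\Sigma$-type path lemma plus function extensionality then packages this into the required path between the two induced maps. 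The main obstacle throughout is bookkeeping rather than insight: carefully tracking placements of inverses and how $h'_C[-]$ interacts with concatenations is conceptually routine, but is the sort of thing that accounts for a nontrivial chunk of the corresponding Coq proof.
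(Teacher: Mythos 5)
Your construction is correct, and the explicit formula $\bigl(h_A(a),\,h_B(b),\,h_f(a)\pdot h_C[p]\pdot\pinv{h_g(b)}\bigr)$ is exactly the right action on the standard pullback (with the orientation of homotopies you fixed). The paper, however, deliberately does \emph{not} argue this way: as noted just after the statement of the lemma on inverses of cospan maps, all facts about cospan maps --- including this functoriality --- are deduced from general lemmas about commutative squares viewed as maps between functions (the file \lstinline{CommutativeSquares}). A cospan map is two such squares sharing the leg $h_C$, so the identity and composition laws for the induced pullback map follow from the corresponding laws for squares, applied once rather than separately to the left and right legs; the same square-level lemmas are then reused wholesale for diagrams over arbitrary graphs in Section~\ref{sec:graph-limits}. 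Your direct approach buys an elementary, self-contained computation with a concrete formula, at the cost of precisely the bookkeeping you acknowledge, which would have to be repeated for each leg and again for general diagrams. Two small points to tighten: in the identity case the third component is $\refl\pdot \mathrm{id}[p]\pdot\pinv{\refl}$, and identifying $\mathrm{id}[p]$ with $p$ is itself a (propositional, not definitional) lemma proved by path induction, in addition to the groupoid laws; and in both verifications the final path between \emph{functions} indeed requires function extensionality on top of the $\Sigma$-type path characterization, as you say --- in the formalization this is where a tailored \lstinline{pullback_path'}-style lemma does the work.
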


The most frequent application of this functoriality, in practice, is the invariance of pullbacks under equivalences — that, for instance, given a cospan $\xymatrix{A \ar[r]^{f} & C & B \ar[l]_{g}}$ and an equivalence $e \colon A' \equiv A$, there is an equivalence between the pullbacks $\Pb(f,g)$ and $\Pb(f \cdot e, g)$.  This, and various other similar statements, are all easily obtained from the functoriality of $\Pb$ together with the lemma:

\SaveVerb{cospan_equiv_inverse}|cospan_equiv_inverse|
\begin{lemma}[\UseVerb{cospan_equiv_inverse}] 
Suppose $h = (h_A,h_B,h_C,h_f,h_g)$ is a cospan map from $(f,g)$ to $(f',g')$, and $h_A$, $h_B$, $h_C$ are equivalences.  Then there is a cospan map $h^{-1} \colon (f',g') \to (f,g)$, inverse to $h$ in that there are paths $\Id{h \cdot h^{-1}}{1}$ and $\Id{h^{-1} \cdot h}{1}$.
\end{lemma}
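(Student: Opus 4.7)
The approach is to construct $h^{-1}$ componentwise and then verify both composites give the identity cospan map. Since a cospan map is packaged as an iterated $\Sigma$-type of maps and homotopies, paths between cospan maps decompose via the standard characterization of paths in $\Sigma$-types (Section~\ref{sec:hott-background}) together with function extensionality; so it suffices to exhibit the required data and then check identities on each component.

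For the underlying maps, since each of $h_A, h_B, h_C$ is assumed to be an equivalence, the results recalled in Section~\ref{sec:equiv-and-trunc} furnish quasi-inverses $\bar{h}_A$, $\bar{h}_B$, $\bar{h}_C$ together with unit and counit homotopies $\eta_X$ and $\epsilon_X$. For the homotopy components of $h^{-1}$, we construct $h_f^{-1}$ pointwise: at $a' : A'$, concatenate three ingredients, namely (i) the inverse of $\epsilon_C$ at $f(\bar{h}_A a')$, (ii) the image under $\bar{h}_C$ of $h_f$ evaluated at $\bar{h}_A a'$, and (iii) the image under $\bar{h}_C \cdot f'$ of $\epsilon_A$ at $a'$. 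The orientations are dictated by the convention for the direction of $h_f$. Define $h_g^{-1}$ analogously using $h_g$, $\epsilon_C$, and $\epsilon_B$.

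To verify $\Id{h \cdot h^{-1}}{1}$ and $\Id{h^{-1} \cdot h}{1}$, reduce by the $\Sigma$-path characterization and function extensionality to checking componentwise that (a) each composite $h_X \cdot \bar{h}_X$ admits a path to the identity, which is immediate from the quasi-inverse data, and (b) after transport along the paths from (a), the composed homotopy $(h \cdot h^{-1})_f$ admits a path to the reflexivity homotopy on $f$, and similarly for $g$. I expect (b) to be the main obstacle: the composed homotopy is not literally trivial, and identifying it with reflexivity after transport requires a coherence computation involving the groupoid laws, the interaction between $\epsilon_X$ and $\eta_X$, and the action of $f$, $g$, and $\bar{h}_C$ on paths. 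Each individual cancellation is routine, but the bookkeeping is intricate — this is the typical flavor of coherence arguments in homotopy type theory, and is precisely where the HoTT library's infrastructure for path manipulation and functoriality lemmas earns its keep.
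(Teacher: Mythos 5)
Your construction is correct in substance, but it takes the direct route that the paper deliberately avoids. The paper does not prove this lemma by writing out the inverse cospan map componentwise; instead (as noted immediately after the lemma's statement) it observes that a cospan map is just two commutative squares sharing the common leg $h_C$, proves once and for all in \lstinline{CommutativeSquares} that a commutative square whose vertical maps are equivalences can be inverted, and then applies that single lemma to each leg of the cospan. Your argument carries out, twice over, essentially the same zig-zag of counits and whiskered homotopies that the square-inversion lemma packages once; the mathematical content is identical, but the paper's factoring halves the bookkeeping and, more importantly, lets the same infrastructure be reused verbatim for maps of diagrams over arbitrary graphs in Section~2.2. One caveat on your step (b): for the transported composite homotopy to cancel down to reflexivity, a bare quasi-inverse (unit and counit with no compatibility) is not quite enough --- the computation uses the triangle identity relating $\epsilon$ and $\eta$, i.e.\ the fact that the equivalence data can be taken to be adjoint. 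This is harmless here, since the contractible-fibers definition of equivalence recalled in Section~1.2.2 (and the HoTT library's \lstinline{IsEquiv}) supplies coherent inverse data, but it is worth making the dependence explicit rather than treating the unit and counit as independent.
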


An interesting technical point arises here: rather than proving this and other facts about cospan maps directly, we deduce them from the analogous facts about commutative squares (considered as maps between functions).  These are developed in the file \lstinline{CommutativeSquares}. Most immediately, this arrangement slightly simplifies the proofs in the present section, since one does not have to write each construction out separately for the left and right legs of the cospan. It also allows us to directly re-use the commutative squares material in Section~\ref{sec:graph-limits}, as the building blocks of the analogous facts about diagrams over general graphs.

\subsubsection{The universal property of pullbacks}

Above, we defined pullbacks by a specific construction.  Alternatively, one can characterize them by a universal property.  For the next few definitions, fix some cospan $\xymatrix{A \ar[r]^{f} & C & B \ar[l]_{g}}$.

\SaveVerb{cospan_cone}|cospan_cone|
\begin{definition}[\UseVerb{cospan_cone}]\label{def:cospan-cone}
Let $X$ be any type.  A \emph{cone} $\mu$ over $(f,g)$ with vertex $X$ consists of functions $\mu_A$, $\mu_B$, and a homotopy $\mu_C : f \cdot \mu_A \homot g \cdot \mu_B$:
\[ \begin{tikzpicture}
\node (X) at (-1.5,1.5) {$X$};
\node (A) at (-1,0) {$A$};
\node (B) at (0,1) {$B$};
\node (C) at (0,0) {$C$};
\draw[cd,swap,close] (X) to node[pos=0.3] {$\mu_A$} (A);
\draw[cd,close] (X) to node[pos=0.3] {$\mu_B$} (B);
\draw[cd] (A) to node {$f$} (C);
\draw[cd] (B) to node {$g$} (C);
\draw[homot,bend left=20] (A) to node[closer] {$\mu_C$} (B);
\end{tikzpicture} \]
Write $\Cone(X;f,g)$ for the type of cones over $(f,g)$ with vertex $X$.
\end{definition}

$\Cone(X;f,g)$ should be contravariantly functorial in $X$.  We do not show this in full; but in particular, a map $f \colon X' \to X$ induces a map
\[
\Cone(X;f,g) \to \Cone(X';f,g),
\]
given by precomposing the components of the cone with $f$.  For a cone $\mu$, we denote this as $\mu \circ f$.  Fixing a cone $\mu : \Cone(X;f,g)$ thus induces for any type $X'$ a map
\[ (\mu \circ -) \colon (X' \rightarrow X) \to \Cone(X';f,g). \]

This allows us to define the universal property of pullbacks:
\SaveVerb{is_pullback_cone}|is_pullback_cone|
\begin{definition}[\UseVerb{is_pullback_cone}]
A cone $\mu$ over $(f,g)$, with vertex $P$, is an \emph{(abstract) pullback} for $(f,g)$ if for every small type $X : \synU$, the map $(\mu \circ -)$ gives an equivalence $(X \rightarrow P) \equiv \Cone(X;f,g)$.
\end{definition}

One can of course ask whether $(\mu\circ -)$ is an equivalence for an arbitrary type $X$, not necessarily small; but to quantify over types, one must restrict to some universe. Even doing so, the resulting property of “being a pullback” is (a priori) as large as the universe used.  It is, however, a mere proposition, since being an equivalence is one.

(For an investigation of left universal properties of inductive types, defined along similar lines, see \cite{awodey-gambino-sojakova:inductive}.)

\SaveVerb{pullback_universal}|pullback_universal|
\begin{proposition}[\UseVerb{pullback_universal}] \label{prop:pb-up-vs-construction}
The evident cone from the standard pullback $\Pb(f,g)$ (\ref{def:pullback}) to $(f,g)$ is an abstract pullback.
\end{proposition}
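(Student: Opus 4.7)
The plan is to exhibit an explicit quasi-inverse to the map $(\mu \circ {-}) : (X \to \Pb(f,g)) \to \Cone(X;f,g)$ and verify both round trips. Given a cone $\nu = (\nu_A,\nu_B,\nu_C)$ with vertex $X$, define $\psi(\nu) : X \to \Pb(f,g)$ by $\psi(\nu)(x) := (\nu_A(x), \nu_B(x), \nu_C(x))$; this is well-typed, since $\nu_C(x) : \Id{f(\nu_A x)}{g(\nu_B x)}$ is precisely a witness for membership in $\Pb(f,g)$. Structurally, the content is distributivity of $\Pi$ over $\Sigma$ (the type-theoretic ``axiom of choice''): expanding definitions, both $(X \to \Pb(f,g))$ and $\Cone(X;f,g)$ are iterated $\Sigma$ and $\Pi$ of the same constituent types in opposite orders, and $\psi$ is the canonical reshuffling. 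In particular, $\psi$ is evidently a section of a map into the universe, so the small-type restriction in the definition of abstract pullback costs nothing: the argument produces a full equivalence for any type $X$.

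For the first round trip, given $h : X \to \Pb(f,g)$, the composite $\psi(\mu \circ h)$ sends $x$ to the triple $(\pi_A(hx), \pi_B(hx), \pi_3(hx))$, where $\pi_3$ denotes the third projection of $\Pb(f,g)$. By the $\eta$-rule for $\Sigma$-types (or, absent that, by path induction as in Section~\ref{sec:hott-background}) this triple equals $h(x)$, and by $\eta$ for functions (or function extensionality) this lifts to a path $\Id{\psi(\mu \circ h)}{h}$. For the other round trip, given $\nu : \Cone(X;f,g)$, the three components of $\mu \circ \psi(\nu)$ are $\pi_A \circ \psi(\nu)$, $\pi_B \circ \psi(\nu)$, and $\pi_3 \circ \psi(\nu)$, which reduce to $\nu_A$, $\nu_B$, and $\nu_C$ respectively by the $\beta$-rules for projections. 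Assembling these component-wise equalities into a single path $\Id{\mu \circ \psi(\nu)}{\nu}$ in the $\Sigma$-type defining $\Cone(X;f,g)$ uses the construction of paths in a $\Sigma$-type from paths between components (Section~\ref{sec:hott-background}) together with function extensionality for the pointwise-homotopy component.

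There is no real conceptual obstacle; the main work — such as it is — lies in bookkeeping for paths in iterated $\Sigma$-types and in lifting pointwise definitional equalities to equalities in function and $\Sigma$-types. The subtle point to watch is that what is ``obviously'' an inverse pointwise must be shown to be an inverse up to paths in \FML, which forces several controlled uses of function extensionality; the Coq formalization is precisely where this otherwise-silent bookkeeping is made explicit.
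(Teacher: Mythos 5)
Your proposal is correct and is essentially the paper's own proof: the paper likewise constructs the map $X \to \Pb(f,g)$ directly from a cone and observes that, by functional extensionality, this is inverse to composition with the standard cone. Your version just spells out the round-trip verifications (the $\Sigma$-type path bookkeeping and the uses of function extensionality) that the paper leaves implicit.
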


\begin{proof}
By direct construction: any cone from some $X$ to $(f,g)$ induces a map $X \to \Pb(f,g)$, and by functional extensionality, this construction is inverse to composition with the standard cone.
\end{proof}

\SaveVerb{abstract_pullback_unique}|abstract_pullback_unique|
\begin{proposition}[\UseVerb{abstract_pullback_unique}]\label{prop:pb-cone-1}
If $\mu : \Cone(X;f,g)$ and $\nu : \Cone(Y;f,g)$ are both pullbacks for $(f,g)$, then the unique map $f \colon Y \to X$ such that $\Id{\mu \circ f}{\nu}$ (provided by the universal property of $\mu$) is an equivalence.

Conversely, if $\mu : \Cone(X;f,g)$ is any cone, and $f \colon X \equiv Y$ an equivalence, then setting $\nu := \mu \circ f$, $\mu$ is a pullback if and only if $\nu$ is.
\end{proposition}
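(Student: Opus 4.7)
The plan is to run the familiar ``unique up to unique isomorphism'' argument internally: since ``$\mu$ is a pullback'' asserts that $(\mu \circ -)$ is an equivalence at every vertex, both parts reduce to manipulations of equivalences between hom-types, together with associativity and unit laws for cone precomposition.

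I would dispose of the converse (second) statement first, since it is pure transport along an equivalence and is the cleaner of the two. Fix an equivalence $e$ between $X$ and $Y$, and let $\nu$ be $\mu$ precomposed with the appropriate representative of $e$, so $\nu$ is a cone on the other vertex. Precomposition with $e$ (in either direction) defines an equivalence between the function types $(Z \to X)$ and $(Z \to Y)$ — a standard consequence of function extensionality, with precomposition by an inverse of $e$ as two-sided inverse. Associativity of cone precomposition exhibits $(\nu \circ -)$ as the composite of this precomposition equivalence with $(\mu \circ -)$, so the 2-out-of-3 property of equivalences gives both directions of ``$\mu$ is a pullback iff $\nu$ is.''

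For the first statement, I would instantiate the universal properties twice: from the universal property of $\nu$ at vertex $X$, obtain a map $g \colon X \to Y$ with $\Id{\nu \circ g}{\mu}$; and the given $f \colon Y \to X$ arises symmetrically from the universal property of $\mu$ at vertex $Y$. Then using associativity and unit laws for cone precomposition, one assembles a chain of paths
\[ \mu \circ (f \circ g) \;=\; (\mu \circ f) \circ g \;=\; \nu \circ g \;=\; \mu \;=\; \mu \circ \mathrm{id}_X. \]
Injectivity of $(\mu \circ -)$ at $X$ (it is an equivalence, in particular its homotopy fibers are contractible) then yields a path $\Id{f \circ g}{\mathrm{id}_X}$. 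A symmetric computation gives $\Id{g \circ f}{\mathrm{id}_Y}$, so $f$ admits a two-sided inverse and is therefore an equivalence by the characterization recalled in Section~\ref{sec:equiv-and-trunc}.

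The main obstacle is coherence bookkeeping rather than any genuinely difficult step. The universal property does not give that $f \circ g$ is literally $\mathrm{id}_X$, but that the two become identified along a path after applying $(\mu \circ -)$; so one must assemble a single coherent path $\mu \circ (f \circ g) = \mu \circ \mathrm{id}_X$ from the chosen paths $\mu \circ f = \nu$, $\nu \circ g = \mu$, the associativity and unit laws for cone precomposition, and path manipulations in the $\Sigma$-type defining $\Cone(X;f,g)$. Given the path infrastructure of Section~\ref{sec:hott-background}, this is routine, but it is the one place where care is required.
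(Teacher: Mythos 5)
Your proposal is correct, and your treatment of the converse is essentially the paper's: both exhibit $(\nu \circ -)$ as the composite of $(\mu \circ -)$ with the postcomposition equivalence $(Z \to Y) \to (Z \to X)$ and apply 2-out-of-3. For the first statement, however, you take a genuinely different route. You run the classical mutual-inverse argument: extract $g$ from the universal property of $\nu$ at $X$, compute $\mu \circ (f \circ g) = \mu \circ \mathrm{id}_X$, and cancel $(\mu \circ -)$ to get a two-sided inverse for $f$. The paper instead avoids constructing $g$ altogether: it forms the commuting triangle comparing $(\mu \circ -)$ and $(\nu \circ -)$ at the single vertex $\synOne$, uses the equivalences $X \equiv (\synOne \to X)$ and $Y \equiv (\synOne \to Y)$ to transport $f$ into that triangle, and concludes by 2-out-of-3 that $f$ is an equivalence --- one diagram yields both the first statement and the converse. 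What your approach buys is elementarity and the explicit inverse $g$; what it costs is exactly the coherence bookkeeping you flag at the end: assembling the single path $\Id{\mu \circ (f \circ g)}{\mu \circ \mathrm{id}_X}$ requires path algebra in the $\Sigma$-type $\Cone(X;f,g)$ (associativity and unit laws for cone precomposition, transport along the chosen witnesses), which is precisely the kind of proof-relevant manipulation Section~\ref{reflections:section} identifies as the expensive part of such formalizations. The paper's 2-out-of-3 argument sidesteps all of that, needing only that the triangle commutes up to homotopy, and also reveals the mildly interesting fact that the universal property need only be invoked at the vertex $\synOne$ to deduce that $f$ is an equivalence.
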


\begin{proof}
The following diagram commutes, and the maps $X \to (1 \rightarrow X)$, $Y \to (1 \rightarrow X)$ are equivalences:
\[ \begin{tikzpicture}
\node (X) at (0,0) {$X$};
\node (X1) at (2,0) {$(1 \rightarrow X)$};
\node (Y) at (0,1.5) {$Y$};
\node (Y1) at (2,1.5) {$(1 \rightarrow Y)$};
\node (C) at (4.5,0.75) {$\Cone(1;f,g)$};
\draw[cd,swap] (Y) to node {$f$} (X);
\draw[cd] (X) to (X1);
\draw[cd] (Y) to (Y1);
\draw[cd,swap,bend right=10] (X1) to node[pos=0.35] {$(\mu \circ -)$} (C);
\draw[cd,bend left=10] (Y1) to node[pos=0.35] {$(\nu \circ -)$} (C);
\end{tikzpicture} \]
It follows by 2-out-of-3 that if any two of $f$, $(\mu \circ -)$, $(\nu \circ -)$ are equivalences, so is the third.
\end{proof}
\todo{The proof in the formalization isn’t currently the same as this!  We should surely change either this or the formalization so they match better?}

\SaveVerb{is_pullback_cone'}|is_pullback_cone'|
\begin{corollary}[\UseVerb{is_pullback_cone'}]
A cone $\mu : \Cone(X;f,g)$ is a pullback cone if and only if the induced map $X \to \Pb(f,g)$ is an equivalence.
\end{corollary}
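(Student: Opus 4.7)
The plan is to deduce this corollary directly from the two preceding propositions, using the standard pullback $\Pb(f,g)$ (together with its canonical cone, call it $\pi$) as the reference object. First I would unpack what the ``induced map'' is: since $\pi$ is a pullback cone by Proposition~\ref{prop:pb-up-vs-construction}, the universal property supplies, for the given $\mu : \Cone(X;f,g)$, a unique map $e \colon X \to \Pb(f,g)$ characterized by a path $\Id{\pi \circ e}{\mu}$. This $e$ is precisely the induced map referred to in the statement, so the task reduces to showing $\mu$ is a pullback cone if and only if $e$ is an equivalence.

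For the forward direction, assume $\mu$ is a pullback cone. Then both $\pi$ and $\mu$ are pullback cones for $(f,g)$, with vertices $\Pb(f,g)$ and $X$ respectively. Applying the first half of Proposition~\ref{prop:pb-cone-1} (with $\pi$ playing the role of the ``$\mu$'' there and our $\mu$ playing the role of ``$\nu$''), the unique map $X \to \Pb(f,g)$ arising from the universal property of $\pi$ and identifying these two cones — which is exactly our $e$ — is an equivalence.

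For the converse, assume $e$ is an equivalence. By the second half of Proposition~\ref{prop:pb-cone-1}, applied to the cone $\pi$ on $\Pb(f,g)$ and the equivalence $e \colon X \to \Pb(f,g)$, the cone $\pi$ is a pullback if and only if $\pi \circ e$ is. But $\pi \circ e$ is homotopic (in fact propositionally equal) to $\mu$ by the defining property of $e$, and precomposing with a homotopy/path preserves the property of being a pullback (this property is a mere proposition, transported along the path $\Id{\pi \circ e}{\mu}$). Since $\pi$ is a pullback by Proposition~\ref{prop:pb-up-vs-construction}, it follows that $\mu$ is too.

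There is no real obstacle here, the argument being essentially a bookkeeping exercise combining Propositions \ref{prop:pb-up-vs-construction} and \ref{prop:pb-cone-1}. The only mild care required is to line up the directions of the cone maps correctly: $e$ goes from $X$ (the vertex of $\mu$) to $\Pb(f,g)$ (the vertex of the standard cone), matching the $Y \to X$ convention used in Proposition~\ref{prop:pb-cone-1}, and the path $\Id{\pi \circ e}{\mu}$ must be transported through the proposition ``is a pullback'' to compare the two cones — a step that is trivial given that ``is a pullback'' is a mere proposition.
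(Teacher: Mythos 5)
Your proposal is correct and is essentially the intended argument: the paper states this as an immediate corollary of Proposition~\ref{prop:pb-up-vs-construction} (the standard cone on $\Pb(f,g)$ is a pullback) and Proposition~\ref{prop:pb-cone-1}, which is precisely the combination you use, with the directions of the comparison map lined up correctly. The only cosmetic remark is that transporting ``is a pullback'' along the path $\Id{\pi\circ e}{\mu}$ needs no appeal to its being a mere proposition --- transport along a path works for any type family --- but this does not affect the argument.
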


Since any two interderivable propositions are necessarily equivalent, this property could be used as an alternative definition of $\mu$ being a pullback cone, with the advantage (compared to our previous definition) of yielding again a small type, since it does not quantify over the universe.

\subsubsection{Two pullbacks lemmas}
\label{sec:two-pullbacks}

We have introduced pullbacks in two different ways: via a concrete construction, and via a universal property.  For each of these, one can give a version of the classical two pullbacks lemma.

\SaveVerb{two_pullbacks_equiv}|two_pullbacks_equiv|
\begin{proposition}[\UseVerb{two_pullbacks_equiv}]
\label{prop:conc-two-pullbacks}
For all $f,g,h$ as in the diagram below, the induced comparison map $\Pb(g^* f, h) \to \Pb(f, g \cdot h)$ is an equivalence:
\[ \xymatrix{
\Pb(f, g \cdot h) \ar@/_3ex/[ddr] \ar[dr] \ar@/^3ex/[drrr] & & & \\
&\Pb(g^*f, h) \ar[d] \ar[r] & \Pb(f,g) \ar[d]^{g^* f} \ar[r] & A \ar[d]^f \\
&B_2 \ar[r]^h &  B_1 \ar[r]^g & C
}\]
\end{proposition}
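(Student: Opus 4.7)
The plan is to prove the equivalence by unfolding both pullbacks into iterated $\Sigma$-types and exhibiting an explicit quasi-inverse to the comparison map, using path induction to check the two round-trips. Writing out the definitions, one finds
\[ \Pb(g^*f,h) \;\equiv\; \sum_{a:A}\sum_{b:B_1}\sum_{p:\Id{fa}{gb}}\sum_{b_2:B_2} \Id{b}{h(b_2)}, \qquad \Pb(f,g\cdot h) \;\equiv\; \sum_{a:A}\sum_{b_2:B_2} \Id{fa}{g(h(b_2))}, \]
and the comparison map $\Phi$ (obtained by chasing the universal property, or directly from the functoriality of $\Pb$ discussed above) sends $(a,b,p,b_2,q)$ to $(a,b_2,\,p \pdot g[q])$.

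My first approach would be to define a candidate inverse $\Psi$ by $\Psi(a,b_2,r) := (a,\, h(b_2),\, r,\, b_2,\, \refl(h(b_2)))$ and verify $\Phi \circ \Psi \homot \id$ and $\Psi \circ \Phi \homot \id$. The first composite sends $(a,b_2,r)$ to $(a,b_2,\, r \pdot g[\refl(h b_2)])$, and equals the identity up to the usual unit law for path concatenation together with the fact that $g[\refl(x)] = \refl(g(x))$. For the second, we need to compare $(a,\, h b_2,\, p \pdot g[q],\, b_2,\, \refl)$ with $(a,b,p,b_2,q)$; path induction on $q : \Id{b}{h b_2}$ reduces this to the case $b = h b_2$ and $q = \refl$, where it again follows from the unit law $p \pdot \refl = p$. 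Constructing the actual path in the $\Sigma$-type is then a routine application of the Section~\ref{sec:hott-background} machinery for paths in $\Sigma$-types, transporting along the first two components to identify the later ones.

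An alternative, perhaps more conceptual route, would be to factor the equivalence as a sequence of standard $\Sigma$-type manipulations: reassociate and swap variables so that $(b, q : \Id{b}{h b_2})$ becomes the outermost pair, then contract this singleton (it is contractible with center $(h b_2, \refl)$ by based-path-space contractibility), leaving the residual type at the center, which is precisely $\Pb(f,g\cdot h)$. Each individual step is an equivalence of $\Sigma$-types from the HoTT library. The main obstacle in either approach is bookkeeping: in the direct route one has to carry out the path-algebra calculations in $\Sigma$-types carefully, and in the factored route one has to verify that the composite of the standard equivalences really coincides (up to homotopy) with the \emph{induced} comparison map from the diagram, rather than some other map with the same endpoints. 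Given the earlier result that functorial constructions on cospans are well-defined up to homotopy, this identification should reduce to checking agreement on a generating case via path induction.
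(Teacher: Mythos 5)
The paper does not actually include a prose proof of this proposition---it is stated without proof and deferred entirely to the formalization---so there is nothing to compare line-by-line. That said, your argument is correct and is exactly the expected one: after unfolding both sides into iterated $\Sigma$-types, the comparison map does send $(a,b,p,b_2,q)$ to $(a,b_2,p\pdot g[q])$, your candidate inverse $\Psi$ works, and the round-trip checks go through by the unit laws plus based path induction on $q$ (equivalently, by contracting the singleton $\sum_{b}\Id{b}{h b_2}$). Your closing caveat is also the right one to flag: the only real work beyond bookkeeping is verifying that the composite of library equivalences agrees with the \emph{induced} comparison map from the diagram, which is precisely the kind of coherence obligation Section~\ref{sec:two-pullbacks-discussion} identifies as the painful part of formalizing these lemmas.
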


\SaveVerb{abstract_two_pullbacks_lemma}|abstract_two_pullbacks_lemma|
\SaveVerb{Pullbacks3}|Pullbacks3|
\begin{proposition}[\UseVerb{abstract_two_pullbacks_lemma}, in \UseVerb{Pullbacks3}]
\label{prop:abs-two-pullbacks}
 Suppose that in a rectangle
\[ \xymatrix{
P_2 \ar[d] \ar[r] & P_1 \ar[d]^{k} \ar[r] \pb & A \ar[d]^f \\
B_2 \ar[r]^h &  B_1 \ar[r]^g & C
}\]
the right square is a pullback. Then the left square is a pullback if and only if the outer rectangle is a pullback.
\end{proposition}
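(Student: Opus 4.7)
The plan is to reduce the abstract statement to the concrete two pullbacks lemma (Proposition~\ref{prop:conc-two-pullbacks}) by using the preceding Corollary, which characterizes abstract pullbacks as those cones whose induced map to the standard pullback is an equivalence. Applying that characterization to the three squares at issue: since the right square is an abstract pullback, the induced comparison map $\kappa \colon P_1 \to \Pb(f,g)$ is an equivalence; similarly, ``the left square is a pullback'' and ``the outer rectangle is a pullback'' become respectively ``$P_2 \to \Pb(k,h)$ is an equivalence'' and ``$P_2 \to \Pb(f, g\cdot h)$ is an equivalence.'' It therefore suffices to produce an equivalence $\alpha \colon \Pb(k,h) \equiv \Pb(f, g\cdot h)$ making the triangle
\[ \xymatrix{ & P_2 \ar[ld] \ar[rd] & \\ \Pb(k,h) \ar[rr]^{\alpha} & & \Pb(f, g\cdot h) } \]
commute up to homotopy, since then 2-out-of-3 yields that either leg is an equivalence iff the other is.

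I build $\alpha$ in two stages. The cone on $(f,g)$ with vertex $P_1$ produces $\kappa$ together with a homotopy exhibiting $k \homot (g^*f) \cdot \kappa$, where $g^*f \colon \Pb(f,g) \to B_1$ is the standard projection. This data assembles into a cospan map $(k,h) \to (g^*f, h)$ whose top component is $\kappa$ and whose other two components are identities on $B_2$ and $B_1$. Since $\kappa$ is an equivalence, functoriality of $\Pb$ together with the invariance lemma for cospan equivalences gives an equivalence $\Pb(k,h) \equiv \Pb(g^*f, h)$; composing with the equivalence $\Pb(g^*f, h) \equiv \Pb(f, g\cdot h)$ supplied by Proposition~\ref{prop:conc-two-pullbacks} yields $\alpha$.

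The main obstacle is the commutativity of the triangle. Both legs send a point $p : P_2$ through the canonical comparison maps, and $\alpha$ postcomposes with $\kappa$ on the $A$-component while reassociating the resulting path in $C$; showing this agrees with the direct map $P_2 \to \Pb(f, g\cdot h)$ requires careful bookkeeping of pasted homotopies contributed by cospan-map functoriality and by Proposition~\ref{prop:conc-two-pullbacks}. Once the formulas are unfolded, however, the check reduces to associativity and identity laws for path concatenation and whiskering, for which the standard HoTT library lemmas suffice.
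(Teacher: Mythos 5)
Your argument is mathematically sound, but it is not the paper's proof---it is essentially the paper's \emph{first, abandoned} attempt, described in Section~\ref{sec:two-pullbacks-discussion}. You reduce everything to the comparison maps into the standard pullbacks and to the concrete two pullbacks lemma (Proposition~\ref{prop:conc-two-pullbacks}), and are then left having to check that a composite of several separately constructed equivalences $\Pb(k,h) \to \Pb(g^*f,h) \to \Pb(f,g\cdot h)$ agrees, up to homotopy, with the canonical map out of $P_2$. The paper's final proof never leaves the level of cone types: it exhibits a single map $\Cone(X;k,h) \to \Cone(X;f,g\cdot h)$, given by pasting a cone on the left cospan with the fixed pullback cone $\mu$ on $(f,g)$, proves directly that this pasting is an equivalence, and then observes that the resulting triangle over $(X \rightarrow P_2)$ commutes---in the formalization, \emph{definitionally}. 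That is exactly what their decomposition buys. By contrast, the coherence you defer to ``associativity and identity laws for path concatenation and whiskering'' is the step the authors single out as the hardest in the whole development: it requires unfolding two induced-map constructions, the functoriality homotopies of $\Pb$, and the cospan-map data, and they report that even after completing it the term would not get past the type-checker. So your proposal is correct in principle but materially underestimates its own final step; if you want a proof that is completable with reasonable effort, restructure it as the paper does so that the only nontrivial content is the single equivalence of cone types.
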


\begin{proof}
Write $\mu$ for the cone from $P_1$ to $(f,g)$, $\mu'$ for the cone from $P_2$ to $(g^*f,h)$, and $\mu''$ for the cone from $P_2$ to $(f, g \cdot h)$.  Then for any $X$, the following triangle commutes:
\[
\begin{tikzpicture}
\node (X) at (-1,1) {$(X \rightarrow P_2)$};
\node (mu') at (2,2) {$\Cone(X;g^*f,h)$};
\node (mu'') at (2,0) {$\Cone(X;f,g \cdot h)$};
\draw[cd] (X) to node {$(\mu' \cdot -)$} (mu');
\draw[cd,swap] (X) to node {$(\mu'' \circ -)$} (mu'');
\draw[cd] (mu') to (mu'');
\end{tikzpicture}
\]
Here the vertical map denotes the composition of a cone on $(g^*f,h)$ with $\mu$; and this can be shown (by direct construction) to be an equivalence. Hence by 2-out-of-3, $(\mu' \circ -)$ is an equivalence if and only if $(\mu'' \circ -)$ is.
\end{proof}

It should be noted that the arguments involved in showing the equivalence $\Cone(X;g^*f,h) \equiv \Cone(X;f,g \cdot h)$ are necessarily more involved than in the 1-categorical setting, since they depend on comparing paths in types; in terms of the classical theory, this is more analogous to the corresponding lemma for quasi-pullbacks in a bicategory.

\subsubsection{Equalizers}

The formal definitions and theorems corresponding to the remainder of Section~\ref{sec:pullbacks} are found in the file \lstinline{Pullbacks2}, except for the next definition, which appears in \lstinline{Equalizers}.
 
\SaveVerb{equalizer}|equalizer|
\begin{definition}[\UseVerb{equalizer}]
 Let $f, g \colon A \to B$. The \emph{equalizer} of $f$ and $g$ is defined as the type:
 \[ \Eq(f, g) := \sum_{x : A} \Id{fx}{gx}.\]
 together with the projection $\pi \colon \Eq(f,g) \to A$.
\end{definition}

As in classical category theory, pullbacks and equalizers can be defined in terms of each other.

\SaveVerb{eq_as_pb_equiv}|eq_as_pb_equiv|
\begin{proposition}[\UseVerb{eq_as_pb_equiv}]
 The equalizer of any parallel pair $f, g \colon A \to B$ is equivalent to the pullback of the paired map $\langle f ,g \rangle \colon A \to B \times B$ and the diagonal $\Delta_B$:
\[ \phantom{\Eq(f,g) \equiv {}} \xymatrix{
  \mathllap{\Eq(f,g) \equiv {}} \Pb(\Delta_B,\langle f,g \rangle )  \ar[r] \ar[d] & A \ar[d]^{\langle f , g \rangle} \\
  B \ar[r]_{\Delta_B} & B \times B
}\]

Conversely, the pullback of any cospan $\xymatrix{A \ar[r]^{f} & C & B \ar[l]_{g}}$ is equivalent to the equalizer of the pair
 \[ f \cdot \pi_1, g \cdot \pi_2 \colon A \times B \to C.\]
\end{proposition}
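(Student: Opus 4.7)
My plan is to prove both equivalences by direct manipulation of the $\Sigma$-types involved, using two standard tools from the HoTT library: the characterization of paths in product types $\Id{(x_1,x_2)}{(y_1,y_2)} \equiv \Id{x_1}{y_1} \times \Id{x_2}{y_2}$ (no transport appears since $B \times B$ is a non-dependent product), and the contractibility of the ``based path space'' $\sum_{x:X} \Id{x}{x_0}$.

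For the first equivalence, unfolding the definitions gives
\[ \Pb(\Delta_B, \langle f,g\rangle) \;=\; \sum_{b : B,\, a : A} \Id{(b,b)}{(fa,ga)}. \]
Applying the product-path characterization to the innermost identity type yields
\[ \sum_{b : B,\, a : A} \Id{b}{fa} \times \Id{b}{ga}. \]
I would then reorder the $\Sigma$ to bring $a$ outside and swap the two factors:
\[ \sum_{a : A} \sum_{b : B} \bigl(\Id{b}{fa} \times \Id{b}{ga}\bigr) \;\equiv\; \sum_{a : A} \sum_{(b,p) : \sum_{b:B} \Id{b}{fa}} \Id{b}{ga}. \]
Since the type $\sum_{b:B} \Id{b}{fa}$ is contractible with center $(fa, \refl(fa))$, contracting it substitutes $b := fa$ and leaves $\sum_{a:A} \Id{fa}{ga} = \Eq(f,g)$. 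Composing these equivalences yields the desired map; tracing through, it sends $(b,a,q)$ to $(a, \overline{q_1} \pdot q_2)$, where $q_1, q_2$ are the projections of $q$ under the product-path decomposition.

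The second equivalence is essentially a repackaging. By the standard $\Sigma$-currying equivalence $\sum_{a:A} \sum_{b:B} C(a,b) \equiv \sum_{p : A \times B} C(\pi_1 p, \pi_2 p)$, we have
\[ \Pb(f,g) \;=\; \sum_{a:A,\, b:B} \Id{fa}{gb} \;\equiv\; \sum_{p : A \times B} \Id{f(\pi_1 p)}{g(\pi_2 p)} \;=\; \Eq(f \cdot \pi_1,\, g \cdot \pi_2), \]
and the equivalence sends $(a,b,q)$ to $((a,b), q)$ with the obvious inverse.

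The routine step that deserves the most care is the first equivalence: although each ingredient is a standard library lemma, the composite assembles several $\Sigma$-reorderings and a contraction, so the bookkeeping (especially verifying that the composite maps are mutually inverse rather than just equivalent up to some further path) is where most of the actual work lies. In practice I would prove it in the modular style sketched above, so each step invokes only a single named equivalence and the final inverse property falls out from composing the inverses in reverse order.
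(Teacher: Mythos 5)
Your proof is correct, and it is the standard argument: the paper itself gives no proof of this proposition (deferring to the formal development in \lstinline{Pullbacks2}), and your decomposition into the product-path characterization, $\Sigma$-reassociation, and contraction of the based path space $\sum_{b:B}\Id{b}{fa}$ is exactly the expected route. Note only that the proposition claims a bare equivalence of types, which is all you prove; this matches the statement as given (the authors themselves remark that a stronger claim, compatibility with the cone projections, would be preferable but is not what is formalized).
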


\todo{This is a bit of a weak statement.  It would be better to give the fact that a certain specific cones under the equalizer is a pullback, and vice versa.  However we shouldn’t do that here if it’s not in the formalization!}

\subsubsection{Homotopy fibers and loop spaces}

We next consider a couple of examples which bring out the homotopical character of the theory---examples which in classical 1-category theory, and in the type theory with UIP\footnote{“Uniqueness of Identity Proofs”: the axiom that every identity type $\Id[X]{x}{y}$ is a mere proposition \cite{streicher:semantics-book}, \cite{warren:thesis}.}, would be trivial, but which in the un-truncated type theory become non-trivial, corresponding to the classical theory of \emph{homotopy} pullbacks.

We first need one piece of notation.  Given a type $B$ and an element $b : B$, write $\name{b} \colon \synOne \to B$ for the map sending the unique element $* : \synOne$ to $b$.

\SaveVerb{hfiber_to_pullback_equiv}|hfiber_to_pullback_equiv|
\begin{example}[\UseVerb{hfiber_to_pullback_equiv}]\label{ex:hfib-as-pb}
 Given a map $f \colon A \to B$ and an element $b : B$, the homotopy fiber of $f$ over $b$ may equivalently be given as a pullback:
\[ \phantom{\hfib(f,b) \equiv {}}
\xymatrix{ \mathllap{\hfib(f,b) \equiv {}} \Pb(\name{b},f) \ar[r] \ar[d] & A \ar[d]^f \\
              \synOne \ar[r]_{\name{b}} & B}\]
\end{example}

\SaveVerb{Omega_to_pullback_equiv}|Omega_to_pullback_equiv|
\begin{example}[\UseVerb{Omega_to_pullback_equiv}]\label{ex:loop-as-pb}
 Given a type $B$ and an element $b : B$, the space of loops in $B$ based at $b$, $\Omega(B, b) := \Id[B]{b}{b}$ may be given as a pullback:
 \[ \phantom{\Omega(B,b) \equiv {}} \xymatrix{
  \mathllap{\Omega(B, b) \equiv {}} \Pb(\name{b},\name{b}) \ar[r] \ar[d] & \synOne \ar[d]^{\name{b}} \\
  \synOne \ar[r]_{\name{b}} & B.
}\]
\end{example}

This last example in particular exemplifies the fact that our pullbacks correspond, in the classical setting, to \emph{homotopy} pullbacks.

\todo{Again, for both of these, it would be much better to show that a certain specific cone under the hfiber/loop space is a pullback; or equivalently, that the equivalences given above induce the cones one would expect.}

\subsubsection{Properties of pullbacks}

Various nice properties of maps are preserved under pullback.  In proving such preservation properties, the following lemma is rather useful:

\SaveVerb{hfiber_of_pullback}|hfiber_of_pullback|
\begin{proposition}[\UseVerb{hfiber_of_pullback}]
\label{prop:hfiber:pullback}
Given $\xymatrix{A \ar[r]^{f} & C & B \ar[l]_{g}}$, the homotopy fiber of the map $f^*g$ over a point $a:A$ is equivalent to the homotopy fiber of $g$ over $f(a)$.
\end{proposition}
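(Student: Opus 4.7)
The plan is to unfold both sides as iterated $\Sigma$-types and manipulate them via the standard contractibility of based path spaces, in the spirit of the rearrangements used throughout Section~\ref{sec:pullbacks}. Here $f^*g$ denotes the projection $\pi_A \colon \Pb(f,g) \to A$, so by definition
\[
 \hfib(f^*g, a) \;\equiv\; \sum_{x : A} \sum_{y : B} \sum_{p : \Id{fx}{gy}} \Id{x}{a},
\]
while the target is $\hfib(g, f(a)) = \sum_{y : B} \Id{gy}{fa}$.

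First, I would reorder the $\Sigma$s on the left so as to gather $x$ with its path to $a$: this gives
\[
 \sum_{x : A} \sum_{q : \Id{x}{a}} \sum_{y : B} \Id{fx}{gy}.
\]
Next, I would use the fact that for any fixed $a$ the type $\sum_{x : A} \Id{x}{a}$ is contractible (with center $(a,\refl(a))$), so that $\Sigma$-ing any family over it is equivalent to evaluating the family at its center. Concretely, this may be packaged as a single lemma, obtained by the elimination principle for identity types: for any family $P(x,q)$ over $x:A$ and $q : \Id{x}{a}$, one has $\sum_{x,q} P(x,q) \equiv P(a, \refl(a))$. Applying this with $P(x,q) := \sum_{y:B} \Id{fx}{gy}$ reduces the expression to $\sum_{y:B} \Id{fa}{gy}$.

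Finally, path inversion gives the equivalence $\Id{fa}{gy} \equiv \Id{gy}{fa}$ fibrewise in $y$, hence $\sum_{y:B} \Id{fa}{gy} \equiv \hfib(g,f(a))$. Composing the three equivalences produces the desired equivalence $\hfib(f^*g,a) \equiv \hfib(g,f(a))$.

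The only mildly delicate step is the second one: one must recognize that the $\Sigma$-over-a-based-path-space contraction is precisely an instance of the identity-type elimination principle applied to the family $P$ above, rather than trying to build the equivalence by hand from $\Sigma$-reassociations alone. Everything else is routine $\Sigma$-type shuffling of the kind that appears, e.g., in Example~\ref{ex:hfib-as-pb} and Proposition~\ref{prop:conc-two-pullbacks}.
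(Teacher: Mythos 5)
Your proof is correct, but it takes a genuinely different route from the paper's. The paper derives the statement in one line from two results already on the shelf: Example~\ref{ex:hfib-as-pb} identifies $\hfib(f^*g,a)$ with the pullback $\Pb(\name{a},f^*g)$ and $\hfib(g,f(a))$ with $\Pb(\name{f(a)},g)$, and the concrete two pullbacks lemma (Proposition~\ref{prop:conc-two-pullbacks}) applied to the rectangle over $\synOne \to A \to C$ supplies the equivalence $\Pb(\name{a},f^*g) \equiv \Pb(f\cdot\name{a},g) = \Pb(\name{f(a)},g)$. You instead redo the underlying computation by hand: unfold both sides as iterated $\Sigma$-types, commute the independent components, contract the based path space $\sum_{x:A}\Id{x}{a}$ via path induction, and invert the remaining path fibrewise. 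All three steps are sound (the middle one is indeed the standard ``$\Sigma$ over a contractible based path space'' lemma, and you are right that it is cleanest as an instance of $\synId$-elimination rather than raw reassociation), so the argument goes through. What each approach buys: yours is self-contained and exhibits the equivalence explicitly, which can be convenient if one later needs to compute with it; the paper's is modular, reusing lemmas whose functoriality and compatibility with the canonical cones are already established---a point the paper itself stresses in Section~\ref{reflections:section} as essential for keeping the formal proof terms manageable. In a formalization your route would essentially re-prove a special case of the concrete two pullbacks lemma, duplicating the transport bookkeeping that the library version already encapsulates.
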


\begin{proof}
Immediate from Example~\ref{ex:hfib-as-pb} together with the concrete two pullbacks lemma, Proposition~\ref{prop:conc-two-pullbacks}.
\end{proof}

\todo{Once again, shouldn’t we also show that this commutes with the hfiber inclusions?}

\SaveVerb{pullback_preserves_equiv}|pullback_preserves_equiv|
\begin{corollary}[\UseVerb{pullback_preserves_equiv}]
Equivalences are stable under pullback.  That is, if $g \colon B \to C$ is an equivalence, then for any $f \colon A \to B$, the pullback $f^* g \colon A \times_B C \to A$ is again an equivalence.
\end{corollary}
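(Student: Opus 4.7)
The plan is to reduce the statement to a fiberwise computation using Proposition~\ref{prop:hfiber:pullback}. Recall from Section~\ref{sec:equiv-and-trunc} that a map is an equivalence precisely when all of its homotopy fibers are contractible; so it suffices to show that $\hfib(f^*g, a)$ is contractible for every $a : A$.

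The key step is a direct application of Proposition~\ref{prop:hfiber:pullback}, which provides, for each $a : A$, an equivalence
\[ \hfib(f^* g, a) \equiv \hfib(g, f(a)). \]
Since $g$ is assumed to be an equivalence, $\hfib(g, f(a))$ is contractible by definition. Contractibility is preserved under equivalence (this is a basic fact from the HoTT library summarized in Section~\ref{sec:equiv-and-trunc}; it was already invoked in the proof that acyclic fibrations are stable under pullback), so $\hfib(f^*g, a)$ is contractible as well. Quantifying over $a : A$, we conclude that $f^* g$ is an equivalence.

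There is essentially no obstacle once Proposition~\ref{prop:hfiber:pullback} is available: the entire content of the corollary is the translation between "all fibers contractible" and "is an equivalence", combined with the fiberwise identification that proposition provides. Notice also that this proof is purely internal, so the result could equally well be packaged as a fact about the internal notion of equivalence, with no appeal to the external fibration-category structure of Section~\ref{fibcats:section}.
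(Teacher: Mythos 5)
Your proof is correct and is essentially identical to the paper's: the paper likewise observes that each homotopy fiber of $f^*g$ is equivalent (via Proposition~\ref{prop:hfiber:pullback}) to a fiber of $g$, hence contractible, so $f^*g$ is an equivalence. Your write-up just spells out the same one-line argument in more detail.
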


\begin{proof}
Each fiber of $f^* g$ is equivalent to some fiber of $g$, so is contractible.
\end{proof}

More generally, any property of maps defined or characterized fiberwise, using an equivalence-invariant property of types, is itself stable under pullback (\lstinline{pullback_preserves_fiberwise_properties}).

\subsection{Limits}
\label{sec:graph-limits}

Generalizing the constructions above of pullbacks and equalizers, we move to limits for diagrams over arbitrary graphs. Unless otherwise noted, the formal definitions and theorems that follow are found in \lstinline{Limits}.

\subsubsection{Graphs and diagrams}

\SaveVerb{graph}|graph|
\begin{definition}[\UseVerb{graph}]
  A \emph{graph} $G$ consists of:
  \begin{itemize}
    \item  a type $G_0$ (the \emph{vertices} or \emph{objects} of $G$); and
    \item  for each $i,j : G_0$, a type  $G_1(i,j)$ (the \emph{edges} or \emph{arrows} from $i$ to $j$).\footnote{Note that we do not assume truncatedness for any of the types involved; we do not need to, essentially since the definiton doesn’t posit any paths within them.  Cf.\ Section~\ref{sec:why-not-cats}.}
  \end{itemize}
\end{definition}

\SaveVerb{diagram}|diagram|
\begin{definition}[\UseVerb{diagram}]
  A \emph{diagram} $D$ on a graph $G$ consists of:
  \begin{itemize}
    \item for each vertex $i \colon G_0$, a type $D_0(i)$;
    \item for each arrow $g : G_1 (i,j)$ of $G$, a map $D_1(g) \colon D(i) \to D(j)$.
  \end{itemize}
\end{definition}

For both graphs and diagrams, we will often suppress the subscripts when they are clear from context.

\SaveVerb{cospan_graph}|cospan_graph|
\SaveVerb{Limits2}|Limits2|
\begin{example}[\UseVerb{cospan_graph}, in \UseVerb{Limits2}]\label{ex:graph-pb}
  To recover cospans as an example of these diagrams, one can define a graph by taking $G_0$ to be the type with three elements, $\{l,m,r\}$ and let $G_1$ be given by:
  \begin{itemize}
    \item $G(l,m) := \synOne$,
    \item $G(r,m) := \synOne$,
    \item $G(i,j) := \emptyset$ otherwise.
  \end{itemize}
 A diagram $D$ over this graph corresponds precisely to a cospan:
 \[\xymatrix{ & D(r) \ar[d] \\
             D(l) \ar[r] & D(m)}\]
\end{example}

\subsubsection{The universal property of limits}

\SaveVerb{graph_cone}|graph_cone|
\begin{definition}[\UseVerb{graph_cone}]\label{def:cone-graph}
  Given a diagram $D$ on a graph $G$, a \emph{cone} $\mu$ on $D$ with vertex $X$ consists of:
  \begin{itemize}
    \item for each $i : G_0$, a map $\mu^0_i \colon X \to D_0(i)$;
    \item for each arrow $g : G_1 (i,j)$, a homotopy $\mu^1_g \colon D_1(g) \cdot \mu^0_i \homot \mu^0_j$.
  \end{itemize}
  Write $\Cone(X;D)$ for the type of cones on $D$ with vertex $X$.
\end{definition}

Again, we usually suppress the subscripts, writing just $\mu_i$, $\mu_f$.

As with cones over cospans, $\Cone(X;D)$ is functorial in $X$: a function $f \colon X' \to X$ and a cone $\mu : \Cone(X;D)$ may be composed to give a cone $\mu \circ f : \Cone(X;D)$.  This lets us generalize the definition of the universal property:

\SaveVerb{is_limit_cone}|is_limit_cone|
\begin{definition}[\UseVerb{is_limit_cone}]\label{def:up-limit}
 Let $D$ be a diagram on the graph $G$. A cone $\mu$ over $D$, with vertex $L$, is an \emph{(abstract) limit} for $D$ if for every small type $X:\synU$, the map $(\mu \circ -) \colon (X \rightarrow L) \to \Cone(X;D)$ is an equivalence.

By abuse of notation, we often speak of $L$ being the limit of $D$, when the cone $\mu$ is implicit.
\end{definition}

Most of the theorems of the preceding section generalize immediately.  In particular,
\begin{proposition}\label{prop:limit-cone-1}
Given any two limit cones for the same diagram, the canonical map between their vertices is an equivalence; conversely, the composition of any limit cone with an equivalence is again a limit cone.
\end{proposition}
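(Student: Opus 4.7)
The plan is to mirror the proof of Proposition~\ref{prop:pb-cone-1} almost verbatim, since Definition~\ref{def:up-limit} has the same shape as the pullback universal property, with $\Cone(X;D)$ in place of $\Cone(X;f,g)$. Both halves of the statement will fall out from the 2-out-of-3 property of equivalences applied to a single commuting triangle.

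Given any cone $\mu : \Cone(L;D)$ and any map $f : L' \to L$, write $\nu := \mu \circ f$. For every type $X$, composition of cones with functions is associative componentwise, so the three maps $(f \circ -) : (X \rightarrow L') \to (X \rightarrow L)$, $(\mu \circ -) : (X \rightarrow L) \to \Cone(X;D)$, and $(\nu \circ -) : (X \rightarrow L') \to \Cone(X;D)$ form a commuting triangle, via the identity $(\mu \circ -) \cdot (f \circ -) = ((\mu \circ f) \circ -) = (\nu \circ -)$. Taking $X = \synOne$, the map $(f \circ -)$ corresponds to $f$ itself under the natural equivalence $Z \equiv (\synOne \rightarrow Z)$, so by 2-out-of-3, any two of $f$, $(\mu \circ -)$, $(\nu \circ -)$ being equivalences implies the third is.

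For the converse direction, if $\mu$ is a limit cone and $f$ is an equivalence, then $(\mu \circ -)$ is an equivalence by hypothesis and $(f \circ -)$ is an equivalence because post-composition with an equivalence is one, so 2-out-of-3 yields that $(\nu \circ -)$ is an equivalence. The same argument runs at any test type $X$ in place of $\synOne$, so $\nu$ is a limit cone. For the forward direction, given two limit cones $\mu : \Cone(L;D)$ and $\nu : \Cone(L';D)$, the universal property of $\mu$ at $L'$ produces the canonical comparison map $f : L' \to L$ with $\mu \circ f = \nu$; then $(\mu \circ -)$ and $(\nu \circ -)$ are both equivalences, and 2-out-of-3 gives that $f$ is.

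The only real obstacle is minor bookkeeping: verifying strict associativity of cone composition with functions, $(\mu \circ f) \circ g = \mu \circ (f \cdot g)$, which holds by definition since $\mu \circ f$ is the cone whose vertex-components are $\mu_i^0 \cdot f$ and whose edge-homotopies are the pointwise precompositions of $\mu_g^1$ with $f$; and checking that the equivalence $Z \equiv (\synOne \rightarrow Z)$ is natural enough to transport the equivalence $(f \circ -)$ to $f$ itself, which is immediate.
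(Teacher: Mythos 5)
Your proposal is correct and matches the paper's approach: the paper gives no separate proof of Proposition~\ref{prop:limit-cone-1}, stating only that the results of the pullback section ``generalize immediately,'' and the intended argument is precisely the one you give, transplanting the commuting triangle and 2-out-of-3 argument from Proposition~\ref{prop:pb-cone-1} with $\Cone(X;D)$ in place of $\Cone(X;f,g)$. Your additional care about running the triangle at every test type $X$ for the converse direction (not just $X=\synOne$) is a correct and welcome precision.
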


Again as in the previous section, there is a standard construction of the limit:
\SaveVerb{limit}|limit|
\begin{definition}[\UseVerb{limit}]
  Let $D$ be a diagram over a graph $G$. The \emph{(standard) limit} $\Lim D$ is the type of pairs $(x,\alpha)$, where
  \begin{itemize}
    \item $x : \prod_{i : G_0} D(i)$;
    \item $\alpha : \prod_{i,j:G_0,\, g : G(i,j)} (\Id{D(g)(x_i)}{x_j})$.
  \end{itemize}
\end{definition}

There is an evident cone from $\Lim D$ to $D$ (\lstinline{limit_graph_cone}), and as one would hope,
\SaveVerb{limit_universal}|limit_universal|
\begin{proposition}[\UseVerb{limit_universal}]
$\Lim(D)$ is an abstract limit for $D$.
\end{proposition}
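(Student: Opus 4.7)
The plan is to directly generalize the proof of Proposition~\ref{prop:pb-up-vs-construction}. Fix a diagram $D$ on $G$ and write $\pi$ for the standard cone on $D$ with vertex $\Lim(D)$: its component maps are $\pi_i^0 := \lambda (x,\alpha).\, x_i$ and its homotopies are $\pi_g^1 := \lambda (x,\alpha).\, \alpha_{i,j,g}$. I need to exhibit, for each small $X:\synU$, an inverse to $(\pi \circ -) \colon (X \to \Lim(D)) \to \Cone(X;D)$.

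The inverse $\Phi$ is constructed pointwise. Given a cone $\mu = (\mu^0, \mu^1) \in \Cone(X;D)$, define
\[
\Phi(\mu) \colon X \to \Lim(D), \qquad x \mapsto \bigl( \lambda i.\, \mu_i^0(x),\ \lambda i,j,g.\, \mu_g^1(x) \bigr).
\]
This really does land in $\Lim(D)$ since the type of $\mu_g^1(x)$ is exactly $\Id{D(g)(\mu_i^0(x))}{\mu_j^0(x)}$, as required. Both composites need to be shown to be the identity up to a path, and here function extensionality does the bulk of the work.

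For $(\pi \circ -) \cdot \Phi$: given $\mu$, the $i$-th component of $(\pi \circ \Phi(\mu))$ is the map $x \mapsto \mu_i^0(x)$, which equals $\mu_i^0$ by $\eta$ (or by function extensionality); similarly each homotopy component equals $\mu_g^1$ pointwise. A cone is a $\Sigma$-type, so constructing a path between cones is done componentwise, using the path-in-$\Sigma$ principle from Section~\ref{sec:hott-background}, along with function extensionality (both ordinary and dependent) to lift pointwise equalities to path equalities. For $\Phi \cdot (\pi \circ -)$: given $f \colon X \to \Lim(D)$, the function $\Phi(\pi \circ f)$ sends $x$ to $(\lambda i.\, (\pi_i^0 \circ f)(x), \lambda i,j,g.\, (\pi_g^1 \circ f)(x))$, which by $\eta$ on the inner $\Pi$-types equals $f(x) = (\mathrm{fst}\, f(x), \mathrm{snd}\, f(x))$; one more application of function extensionality in $x$ yields a path $\Id{\Phi(\pi \circ f)}{f}$.

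The main obstacle will be the bureaucratic coherence of packing and unpacking $\Sigma$-types and $\Pi$-types, in particular tracking the path-algebraic data produced by applying function extensionality under a second $\Pi$ (for the homotopy components indexed by edges $g : G_1(i,j)$). In practice this amounts to showing that the obvious equivalence
\[
\Cone(X;D) \;\equiv\; \sum_{h : \prod_{i} (X \to D(i))} \prod_{x : X} \prod_{i,j,g} \Id{D(g)(h_i(x))}{h_j(x)}
\]
composed with the evident swap of $\prod_x$ and $\prod_i$ gives, up to path, precisely the map $(\pi \circ -)$. Once the data is reorganized this way the proof becomes a transparent chain of equivalences built from $\eta$, function extensionality, and the $\Sigma$-path principle — no essential new ideas beyond those already used for pullbacks, just more indices to keep track of.
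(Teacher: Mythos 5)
Your proposal is correct and matches the paper's approach: the paper gives no separate proof for the limit case, but its proof of the pullback analogue (Proposition on \verb|pullback_universal|) is precisely this argument --- construct the inverse map directly from a cone, then use function extensionality (together with the $\Sigma$-path principle) to show the two composites are identities. The only point worth flagging is that your second composite also quietly uses propositional $\eta$ for $\Sigma$-types (i.e.\ a path $\Id{(\mathrm{fst}\,f(x),\mathrm{snd}\,f(x))}{f(x)}$), which is derivable by $\Sigma$-elimination but is not a definitional equality in the core theory.
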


\SaveVerb{is_limit_cone'}|is_limit_cone'|
\begin{proposition}[\UseVerb{is_limit_cone'}]
A cone $\mu$ from $X$ to some diagram $D$ is a limit for $D$ if and only if the map $X \to \Lim D$ induced by $\mu$ is an equivalence.
\end{proposition}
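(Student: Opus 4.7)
The plan is to reduce this directly to Proposition~\ref{prop:limit-cone-1}, paralleling the proof of the analogous corollary for pullbacks (where the same statement was derived from Proposition~\ref{prop:pb-cone-1}). I would start by letting $\nu$ denote the canonical cone from $\Lim D$, which by the preceding proposition is an abstract limit cone. The map $f \colon X \to \Lim D$ induced by $\mu$ is then, by definition, the unique map satisfying $\Id{\nu \circ f}{\mu}$, obtained from the universal property of $\nu$ applied to $\mu$.

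For the forward direction, assume $\mu$ is itself a limit cone for $D$. Then both $\mu$ and $\nu$ are limits of $D$, so by the first half of Proposition~\ref{prop:limit-cone-1}, the canonical comparison map $f$ is an equivalence. For the converse, assume $f$ is an equivalence. Then $\mu$ coincides (up to a path) with the composite $\nu \circ f$ of a limit cone with an equivalence, so the second half of Proposition~\ref{prop:limit-cone-1} gives that $\mu$ is again a limit cone.

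There is no substantive obstacle: the result is a formal consequence of the two parts of Proposition~\ref{prop:limit-cone-1} together with the fact that $\Lim D$ equipped with $\nu$ is an abstract limit. The only minor bookkeeping is to check that the map produced by unfolding ``the map $X \to \Lim D$ induced by $\mu$'' really is the canonical comparison map appearing in Proposition~\ref{prop:limit-cone-1}, and that the witnessing path $\Id{\nu \circ f}{\mu}$ is the one needed for the invariance half of that proposition; both are routine unpacking of the construction from Definition~\ref{def:up-limit}.
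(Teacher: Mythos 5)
Your proof is correct and matches the paper's (implicit) argument: the paper states this as a direct generalization of the corresponding pullback corollary, deriving it from the uniqueness/invariance statement (Proposition~\ref{prop:limit-cone-1}) together with the fact that the standard limit cone is an abstract limit, which is exactly your reduction.
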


One again, we may define maps of diagrams (\lstinline{diagram_map}), and show that $\Lim$ is functorial in such maps, and in particular, is functorial in equivalences (\lstinline{limit_fmap_equiv}).  Since graphs, diagrams, and limits are all simply built up from arrows, these definitions and results follow straightforwardly once one has given the basic case of commutative squares, seen as maps between functions. (This is handled in the file \lstinline{CommutativeSquares}.)

\subsubsection{Examples and properties}

\SaveVerb{pb_as_lim_equiv}|pb_as_lim_equiv|
\begin{example}[\UseVerb{pb_as_lim_equiv}, in \UseVerb{Limits2}]
 In Example~\ref{ex:graph-pb} above, we saw that cospans correspond to diagrams over a certain graph.  Then cones over these diagrams correspond to cones over the cospans, as originally defined; and a diagram-cone is a limit exactly if the corresponding cospan-cone is a pullback.
\end{example}

\SaveVerb{lim_as_eq}|lim_as_eq|
\begin{example}[\UseVerb{lim_as_eq}]
Just as in the classical 1-categorical theory, the limit over a diagram $D$ may be constructed as an equalizer of maps between products:
\[ \xymatrix{
  \displaystyle \prod_{\mathclap{i, j : G,\, g : G(i,j)}} \ D(i) \ar@<0.5ex>[r] \ar@<-0.5ex>[r] & \displaystyle \prod_{i : G} D(i)
}\]
\end{example}

Various useful facts are also straightforward to deduce from the standard construction; for instance,
\SaveVerb{trunc_limits_preserve_trunc}|trunc_limits_preserve_trunc|
\begin{proposition}[\UseVerb{trunc_limits_preserve_trunc}, in \UseVerb{Limits2}]
  If $D$ is a diagram on some graph, and each type $D(i)$ is an $n$-type, then $\Lim D$ is an $n$-type; hence via the canonical equivalence, so is any other limit for $D$.
\end{proposition}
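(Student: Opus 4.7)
The plan is to work directly with the standard construction $\Lim D = \sum_{x : \prod_i D(i)} \prod_{i,j,g} \Id{D(g)(x_i)}{x_j}$, and show that $n$-truncatedness is preserved under each of the building blocks: dependent products, dependent sums, and identity types. Once $\Lim D$ is established as an $n$-type, the final clause about arbitrary limits follows immediately from the equivalence to $\Lim D$ provided by Proposition~\ref{prop:limit-cone-1} (or equivalently by the formalized statement \lstinline{is_limit_cone'}), together with the fact that $n$-truncatedness transports across equivalences.

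More specifically, I would first invoke the standard closure lemmas from the HoTT library: (i) if each fiber $B(x)$ is an $n$-type, then $\prod_{x : A} B(x)$ is an $n$-type, using function extensionality (axiom~\ref{enum:funext}); (ii) if $A$ is an $n$-type and each $B(x)$ is an $n$-type, then $\sum_{x : A} B(x)$ is an $n$-type; (iii) if $A$ is an $n$-type, then for each $x,y : A$, the type $\Id[A]{x}{y}$ is an $(n{-}1)$-type, and in particular again an $n$-type. These are all available in the HoTT library and do not require induction on the diagram.

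Using these, the argument for $\Lim D$ is essentially mechanical. By hypothesis each $D(i)$ is an $n$-type, so by (i) the base $\prod_{i : G_0} D(i)$ is an $n$-type. Fix $x$ in this base; for each $i,j,g$, the type $\Id[D(j)]{D(g)(x_i)}{x_j}$ is an $n$-type by (iii) applied to $D(j)$; taking $\Pi$ over $i,j,g$ via (i) twice more yields an $n$-type fiber. Closing under $\Sigma$ via (ii) shows $\Lim D$ is an $n$-type. Finally, any other limit cone $\mu : \Cone(Y;D)$ induces an equivalence $Y \equiv \Lim D$, and $n$-truncatedness is invariant under equivalence, so $Y$ is likewise an $n$-type.

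I do not anticipate a major obstacle here; the only subtlety is bookkeeping the indexing of the inner $\Pi$ (it is a triple product over $i,j : G_0$ and $g : G_1(i,j)$), which in the formalization may require invoking the closure lemmas for $\Pi$-types a few times in succession. The argument uses function extensionality essentially, but makes no use of $\synU$, $\synNat$, or any additional axioms beyond those already present in \FML.
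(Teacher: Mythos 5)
Your proposal is correct and matches the paper's intended argument: the paper gives no written proof, stating only that the result is ``straightforward to deduce from the standard construction,'' which is precisely the route you take (closure of $n$-types under $\Pi$, $\Sigma$, and identity types, then transport across the canonical equivalence). The one point worth making explicit in a formalization is the cumulativity lemma that an $(n-1)$-type is an $n$-type, which you implicitly use in step (iii); it is available in the HoTT library.
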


\subsubsection{Why not categories?} \label{sec:why-not-cats}

One might reasonably ask here: why have we considered limits only over graphs, not over categories as is usual in the 1-categorical theory?

The problem---as ever in homotopical settings---is one of \emph{coherence}.  Defining a category internally is roughly analogous to defining an $(\infty,1)$-category externally; that is, it requires not only identity, composition, associativity, and the like, but also higher-dimensional data ensuring the coherence of the paths witnessing the associativity axioms, and so on in arbitrarily high dimensions.  While we hope that this will eventually be possible in the type theory, it is currently far from clear how to present it.

In defining categories, this problem can be avoided by assuming truncatedness of the types of morphisms; see \cite{ahrens-kapulkin-shulman} for a development of the resulting theory.  However, to talk about diagrams of arbitrary types over such categories would once again require an infinite family of coherence conditions, essentially since one is presenting an $\infty$-functor into the $(\infty,1)$-category of all types, which is not generally $n$-truncated for any $n$.

However, working with graphs avoids these issues entirely: a map out of a graph (or equivalently, out of the free category thereon) consists purely of 0- and 1-dimensional data, with no coherence required.  (More generally, one could use a similar approach to describe diagrams over finite-dimensional computads or semi-simplicial objects without confronting coherence issues.)

\subsection{Pointed types and fiber sequences}
\label{pointed:section}

\subsubsection{Definitions}

The formal definitions and theorems described in this section are found in \lstinline{PointedTypes}.

\SaveVerb{pointed_type}|pointed_type|
\begin{definition}[\UseVerb{pointed_type}]
 A \emph{pointed type} $(A,a_0)$ is a type $A$, together with an element $a_0 : A$, the \emph{basepoint}.  (We will often refer to both the pointed type and its underlying type as $A$, and write $\pt(A)$ for the basepoint.)
\end{definition}

\SaveVerb{pointed_map}|pointed_map|
\begin{definition}[\UseVerb{pointed_map}]
 A \emph{map of pointed types} (or \emph{pointed map}) $(f,p) \colon (A,a_0) \to (B,b_0)$ consists of a function $f \colon A \to B$, together with a path $p : \Id[B]{f(a_0)}{b_0}$.  (Again, we will often write $f$ for the whole pointed map, and $\pt(f)$ for its associated path.)
\end{definition}

The loop space construction $\Omega$ lifts naturally to a map from pointed types to pointed types, setting $\Omega A := (\Id{\pt\, A}{\pt\, A}, \refl(\pt\, A))$.  One can therefore iterate it, giving the $n$-fold loop spaces $\Omega^n A$ of a pointed type.  Moreover, this has an associated action on maps.  A pointed map $f \colon A \to B $ induces a pointed map $\Omega(f) \colon \Omega A \to \Omega B$, with underlying map sending $q : \Id{\pt\, A}{\pt\, A}$ to $\pinv{\pt\, f} \pdot f[q] \pdot \pt\, f : \Id{\pt\,B}{\pt\,B}$.

Similarly, the homotopy fiber construction $\hfib$ lifts naturally to the point\-ed world.  Given a pointed map $f \colon A \to B$, write $\hfib(f)$ for the pointed type given by $\hfib(f,\pt\, B)$, with basepoint $(\pt\, A, \pt\, f)$; and the inclusion $\hfib(f) \to A$ is again a pointed map.

\subsubsection{The long exact sequence of a pointed map}

As an application of the above tools, we can now recover the long exact sequence associated to a pointed map.  This sequence is a basic but powerful computational tool in classical homotopy theory, and promises to be so also in homotopy type theory: \cite[8.5]{hott:book}, for instance, gives a type-theoretic version of the classical proof that $\pi_3(S^2) \iso \Z$, using the long exact sequence of the Hopf fibration.  Similarly, one can straightforwardly reconstruct the classical theory of covering spaces, as families of \emph{sets} varying over a type, and conclude that they induce isomorphisms of higher homotopy groups.

\SaveVerb{hfiber_ptd}|hfiber_ptd|
\begin{definition}[\UseVerb{hfiber_ptd}]
 A \emph{fiber sequence} consists of a pair $F \to^g E \to^f B$ of pointed maps, together with an equivalence $F \equiv \hfib(f)$ commuting with the inclusion $\hfib(f) \to E$.
\end{definition}

Note that up to canonical equivalence, a fiber sequence is determined simply by the single pointed map $E \to B$.

The following theorem is found in \lstinline{LongExactSequences}.
 
\SaveVerb{hfiber_sequence}|hfiber_sequence|
\SaveVerb{Omega_to_hfiber_seq_0}|Omega_to_hfiber_seq_0|
\begin{theorem}[\UseVerb{hfiber_sequence}, \UseVerb{Omega_to_hfiber_seq_0}, et seq.]\label{thm:les}
 Given a pointed map $f \colon E \to B$, there is a sequence of maps
 \[\ldots \to \Omega^2 B \to \Omega F \to \Omega E \to \Omega B \to F \to E \to B\]
 in which every pair of consecutive maps forms a fiber sequence.
\end{theorem}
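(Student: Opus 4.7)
The plan is to build the sequence by iterating the homotopy fiber construction, and then identify the resulting pointed types with iterated loop spaces via a single key lemma.

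First, I would inductively define a sequence of pointed types $(F_n)_{n \geq 0}$ together with pointed maps $f_n \colon F_{n+1} \to F_n$, by setting $F_0 := B$, $F_1 := E$, $f_0 := f$, and, given $f_n \colon F_{n+1} \to F_n$, setting $F_{n+2} := \hfib(f_n)$ with $f_{n+1}$ the canonical pointed inclusion $\hfib(f_n) \to F_{n+1}$. By construction (and the trivial equivalence $F_{n+2} \equiv \hfib(f_n)$), each consecutive triple $F_{n+2} \to F_{n+1} \to F_n$ forms a fiber sequence in the sense of the definition in this section.

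The heart of the argument is the following lemma: for any pointed map $h \colon X \to Y$, writing $i \colon \hfib(h) \to X$ for the fiber inclusion, there is an equivalence of pointed types $\hfib(i) \equiv \Omega Y$. Unfolding, $\hfib(i) = \sum_{(x,p) : \hfib(h)} \Id{x}{\pt X}$, with basepoint $((\pt X, \pt h), \refl)$. Path induction on the component $q \colon \Id{x}{\pt X}$ contracts the $(x,p)$ part, reducing the total space to $\sum_{p : \Id{h(\pt X)}{\pt Y}} \one \equiv \Id{h(\pt X)}{\pt Y}$. Post-composition with $\pinv{\pt h}$ then provides an equivalence to $\Omega Y = \Id{\pt Y}{\pt Y}$, and the basepoint $\pt h$ is sent to $\refl_{\pt Y}$, as required.

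Applying this lemma to each $f_n \colon F_{n+1} \to F_n$ yields the recurrence $F_{n+3} = \hfib(f_{n+1}) \equiv \Omega F_n$, natural enough that it commutes with the next inclusion up to the data demanded by Definition of fiber sequence. Combined with the base values $F_0 = B$, $F_1 = E$, $F_2 = F$, a straightforward induction on $k$ gives $F_{3k} \equiv \Omega^k B$, $F_{3k+1} \equiv \Omega^k E$, and $F_{3k+2} \equiv \Omega^k F$; transporting the maps $f_n$ across these equivalences produces the displayed sequence $\ldots \to \Omega^2 B \to \Omega F \to \Omega E \to \Omega B \to F \to E \to B$, and each consecutive pair still forms a fiber sequence because being a fiber sequence is invariant under pointed equivalence of the first term commuting with the inclusion.

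The main obstacle I expect is the key lemma: although its content is intuitive, carrying it out formally requires carefully choreographed transports and path inductions to get the basepoint identification and the compatibility with $i$ correct. A subtler but related issue is ensuring that the $\Omega^k$-identifications are natural enough that the iterated recurrence $F_{n+3} \equiv \Omega F_n$ composes coherently, so that the fiber-sequence witnesses transport cleanly along the equivalences; this is essentially a bookkeeping task but is where the bulk of the formal effort in \texttt{LongExactSequences} will lie.
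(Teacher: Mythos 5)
Your reduction is exactly the paper's: set $F := \hfib(f)$, observe that consecutive triples in the iterated-fiber tower are fiber sequences by construction, and reduce everything to the single key lemma that for a pointed map $h \colon X \to Y$ with fiber inclusion $i \colon \hfib(h) \to X$ one has $\hfib(i) \equiv \Omega Y$ as pointed types, the rest following by iteration and transport along pointed equivalences. The only divergence is in how that key lemma is proved. You prove it by direct construction: unfolding $\hfib(i)$ as $\sum_{x : X} \sum_{p : \Id{h(x)}{\pt X \text{-image}}} \Id{x}{\pt X}$, contracting the based path space $\sum_{x:X}\Id{x}{\pt X}$, and conjugating by $\pt h$ to land in $\Omega Y$ --- which is correct, and is explicitly acknowledged in the paper as a viable route. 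The paper instead presents Mather's argument: it exhibits $\hfib(i)$ and $\Omega B$ as the pullbacks of, respectively, the left square and the outer rectangle in the diagram $\synOne \to E \to B \leftarrow \synOne$, and concludes by the abstract two pullbacks lemma together with the identifications of $\hfib$ and $\Omega$ as pullbacks (Examples~\ref{ex:hfib-as-pb} and~\ref{ex:loop-as-pb}). What the paper's route buys is conceptual economy and reuse: the basepoint bookkeeping and path algebra you flag as the main obstacle are absorbed into lemmas already established in Section~\ref{sec:pullbacks}, rather than redone by hand with explicit transports. What your route buys is self-containedness and a more elementary, computation-friendly equivalence. Both are sound; your assessment of where the formal effort lies (the basepoint identification and the coherence of the iterated $\Omega$-identifications) is accurate and is precisely the burden the pullback argument is designed to offload.
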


\begin{proof}
Taking $F := \hfib(f)$, it is sufficient to prove that the homotopy fiber of the inclusion $F \to E$ is pointed-equivalent to $\Omega B$; subsequent stages follow by iteration. One can prove this equivalence by direct construction; alternatively, the results of Section~\ref{sec:pullbacks} allow us to give a rather more conceptual proof, due originally to Mather \cite[Lem.~32]{mather:pullbacks-in-homotopy-theory}:
 \[\xymatrix{
 \bullet \ar[r] \ar[d] & F \ar[r] \ar[d] & \synOne \ar[d]^{\name{\pt\, B}} \\
 \synOne \ar[r]^{\name{\pt\, E}} & E \ar[r]^f & B
}\]

By the two pullbacks lemma, the pullback of the left-hand square is equivalent to the pullback of the whole rectangle.  But by Examples~\ref{ex:hfib-as-pb} and~\ref{ex:loop-as-pb}, these pullbacks are respectively equivalent to the homotopy fiber of $F \to E$, and to the loop space $\Omega B$.
\end{proof}

\section{Reflections on the formal verification}
\label{reflections:section}

Formalizing the constructions of Sections~\ref{sec:fundamentals} and~\ref{sec:further-development} was often straightforward: many of the definitions are very naturally expressed in the language of type theory, and verifying their properties is often just a matter of unpacking definitions and applying straightforward logical manipulations and background facts.  Sometimes, however, additional effort was required. In this section, we survey some of the practical lessons learned during the formalization.

\subsection{Limitations}
\label{sec:limitations}
One fundamental challenge that arises comes from working purely in the type theory.  In classical approaches to homotopy theory, one always has an extra external scaffolding available, with (in particular) strict, on-the-nose equality on all types of objects.  One typically expects the main results and constructions to respect appropriate notions of equivalence, but one is free to use intermediate constructions that do not.

Developing the homotopy theory in HoTT, we are constrained to work entirely in a homotopy-invariant manner, rendering some classical techniques unavailable.  In most cases, some fully invariant approach is reasonably apparent; but sometimes, one is not.  We saw such a case in Section~\ref{sec:graph-limits}: we do not know how to represent the notion of a diagram over an arbitrary category, and so restricted attention to (diagrams and limits over) graphs.

\subsection{Proof-relevance}
\label{sec:proof-relevance}
Another difficulty lies in getting used to thinking of proofs of equalities as \emph{constructions} that one might need to prove things about later on.

In traditional formalizations, equality is \emph{proof-irrelevant}: different proofs of the same equality are not logically distinguishable.  In Coq, for instance, one could safely end them with the keyword \lstinline{Qed}, which renders them \emph{opaque}, meaning that one cannot later access their contents. In traditional mathematics, this makes sense; once one has an equality, one only needs the fact that it holds, treating the proof as a black box.

In HoTT, however, equality is proof-relevant: a path type may have multiple logically distinct inhabitants.  When constructing equality proofs in this setting, one typically needs to end an equality proof with the keyword \lstinline{Defined}, allowing the user to unfold that definition later on.  The specifics of the proof matter; one tries to keep proofs as clean and short as possible, using lemmas and constructions with known, previously proven properties.  Unfortunately, this means that several of Coq’s powerful tactics (notably the \lstinline{rewrite} family) are somewhat unsatisfactory in our setting: the paths they produce are difficult to reason about later.

On the other hand, some important statements remain proof-irrelevant.  If a type has been shown to be a proposition, one knows that any two elements of it are canonically equal; so one may make such an element opaque without losing any logical content.  Even so, it is often convenient to leave such objects transparent, to retain their \emph{computational} content.

For instance, for a function \lstinline{f}, the type \lstinline{IsEquiv f} (the property that \lstinline{f} is an equivalence) is a proposition; so in principle one may safely render a proof of this opaque.  However, one often uses such a proof to produce an inverse for \lstinline{f}; if the proof was transparent, then the resulting inverse will retain computational properties from its construction, whereas if the proof is opaque, one must reason explicitly about the action of the inverse.  We formed no clear convention on this: sometimes it turned out more convenient to keep such proofs transparent, for easier reduction in later proofs; in other case, this was unnecessary, and making the proofs opaque gave more efficient compilation.

\subsection{Constructing paths}
\label{sec:constructing-paths}
The most fundamental type constructor in homotopy type theory is the type of paths, and the most challenging parts of proofs usually involved constructing paths between complex objects. Given the subject matter, we never had to pass beyond the 2-categorical level, constructing paths between paths; but even so, this required a good deal of care, and facility with path algebra.

One recurring situation was the construction of paths between elements of a dependent sum, or elements of a record type with dependencies between components. For example, if $( a, b )$ and $( a', b' )$ are elements of a type $\sum_{x : A} B(x)$, constructing a path between these two elements involves constructing a path $p$ from $a$ to $a'$, and then constructing a path $q$ from the transport of $b$ along $p$ to $b'$. Thus in general we have:
\begin{lstlisting}
  Lemma total_paths {A : Type} {B : A -> Type}
    {s s' : total B}
    (p : paths (pr1 s) (pr1 s'))
    (q : paths (p # (pr2 s)) (pr2 s'))
  : s = s'.
\end{lstlisting}
where \lstinline{pr1} and \lstinline{pr2} denote the projections from the total space $\sum_{x : A} B(x)$. For interactive, tactic-based proofs, we generally found it useful to bundle the arguments $p, q$ into a single structure:
\begin{lstlisting}
  Lemma total_paths' {A : Type} {B : X -> Type}
    {s s' : total B}
  : { p : pr1 s = pr1 s' & p # pr2 s = pr2 s' } -> s = s'.
\end{lstlisting}
Recall that here \lstinline${ p : pr1 s = pr1 s' & p # pr2 s = pr2 s' }$ is notation for a dependent sum, denoting the type of pairs $(p, q)$ as above. When constructing a path between elements of a dependent sum, even when $p$ is explicitly available, applying \lstinline{(total_paths p)} sometimes fails to infer implicit arguments. Instead, applying \lstinline{total_paths'} leaves the goal of providing the pair $(p, q)$, providing the user explicitly with their required types. The tactic \lstinline{exists p} can then be used to give the first component, leaving the goal of constructing the second path $q$ interactively.

The problem is that \lstinline{transport} is rather difficult to work with. There are many library lemmas about how its behaviour depends on the dependent type $B$, which in principle allow one to work with transported terms; but we found it more convenient to directly give tailored variants of \lstinline{total_paths} for each specific $\Sigma$- and record type.

For example, taking a cospan $f \colon A \to C$, $g \colon B \to C$, the standard pullback of $f$ and $g$ is given by the type $\sum_{x : A, y : B} \Id{f x}{g y}$.  Using \lstinline{total_path} to provide a path in this type between triples \lstinline{(x;(y;p))}, \lstinline{(x';(y';p'))} would require three paths \lstinline{q : x = x'}, \lstinline{r : q # y = y'}, and \lstinline{s : r # q # p = p'}.  Notice, however, that in this case the second component, $y$, does not depend on $x$, so the transport is trivial; and moreover, the doubly-transported third component can be explicitly described as a composite. Thus, one can provide the following lemma to construct a path between two elements of the standard pullback:
\begin{lstlisting}
  Definition pullback_path' {A B C : Type} {f : A -> C} {g : B -> C}
    (u u' : pullback f g)
  : { p : pullback_pr1 u = pullback_pr1 u'
      & {q : pullback_pr2 u = pullback_pr2 u'
      & (ap f p)^ @ (pullback_comm u) @ (ap g q)
        = pullback_comm u' } }
    -> u = u'.
\end{lstlisting}
The process of analyzing the canonical data for presenting a path between elements of a complex type, and writing lemmas to construct and work with such paths, was crucial to the formalization.

To consider one last example of this sort, recall that a cospan cone, that is, a diagram on the data $f, g$ above, consists of a space, $X$, and maps $h$ and $k$ from $X$ to $A$ and $B$, respectively, making the diagram commute.
\begin{lstlisting}
  Definition cospan_cone {A B C : Type} (f : A -> C)
    (g : B -> C) (X : Type)
  := { h : (X -> A) &
       { k : (X -> B) & forall x, (f(h x)) = (g(k x)) }}.
\end{lstlisting}
A path between two such cones involves, in particular, a path between the family of paths in the third component:
\begin{lstlisting}
  Definition cospan_cone_path
    {A B C : Type} {f : A -> C} {g : B -> C} {X : Type}
    {Phi1 Phi2 : cospan_cone f g X}
    (p : cospan_cone_map1 Phi1 = cospan_cone_map1 Phi2)
    (q : cospan_cone_map2 Phi1 = cospan_cone_map2 Phi2)
    (r : forall x:X,
      cospan_cone_comm Phi1 x = (ap f (ap10 p x)) @ 
          cospan_cone_comm Phi2 x @ (ap g (ap10 q x))^)
  : Phi1 = Phi2.
\end{lstlisting}
Here, \lstinline{cospan_cone_map1}, \lstinline{cospan_cone_map2}, and \lstinline{cospan_cone_comm} refer to the three components of a cospan cone in the preceding definition. As with \lstinline$total_paths$, we also give a version \lstinline$cospan_cone_path'$ that packages the required components into a dependent sum, and is often more convenient in interactive proofs.

The advantage to these formulations is that it is comparatively straightforward (using lemmas from the HoTT library) to reason about transport operations their interactions with each other, as well as with path operations such as concatenation and inversion.

Returning to the question of the path-algebra itself, we found the formalization to require significant facility with such calculations. The HoTT library has a number of tactics for automating common manipulations and simplifications, but we found these tactics generally slowed down the proof-checker significantly.  So, for the most part, we ended up giving such calculations by hand, building them explicitly from basic lemmas.

\subsection{General strategies}

We found it important to develop our theories and proofs in a modular way. The value of modularity in interactive theorem proving is well understood (see, for example, \cite{gonthier:feit-thompson-final}), but in the context of homotopy type theory, it takes on additional significance. For one thing, many statements involving paths can only be proved when stated in full generality (to make available the elimination for $\synId$-types).  As a consequence, some facts cannot be derived in the course of a proof, on the fly, but have to be expressed independently.  The fact that one often needs to reason about the construction of paths provides an additional reason to construct such proofs out of individually-named component lemmas: doing so allows one derive properties of the components individually, and then invoke these properties later on.  In other words, reasoning about a modularly-constructed proof allows one to work with the individual lemmas and unpack their contents selectively, as needed. In contrast, the failure to modularize can result in formal terms that are overwhelming in complexity.

Perhaps the most important lesson we learned was not to expect too much from an interactive theorem prover. Although homotopy type theory provides a powerful framework to support homotopy-theoretic reasoning, one still needs a thorough understanding of the relevant mathematics. To get some of the more complex proofs and constructions to work, we found it vitally important to find the right definitions, the right way of formulating assertions, the right supporting infrastructure, and the right proof strategies.  This required thinking carefully about the mathematical content, avoiding the temptation to simply dive in and hack.

This should not suggest that Coq was no help at all. Indeed, Coq was excellent for helping us keep track of definitions and formulate statements correctly.  Especially for more complex path-constructions, applying standard rules to unwrap and reduce the contents of a goal type was an extremely useful aid to finding the term required.  In practice, we found ourselves going back and forth between the blackboard and Coq, using Coq to negotiate the inevitable syntactic bureaucracy, and then returning to the blackboard to recoup intuitions and plan proof-strategies. In this way, Coq earned its keep, serving as a ``proof assistant'' in a very real sense.

\subsection{A case study: the two pullbacks lemma}
\label{sec:two-pullbacks-discussion}

We close with a discussion of the abstract two pullbacks lemma, Proposition~\ref{prop:abs-two-pullbacks}, by way of illustration. Somewhat to our surprise, this turned out to be the most difficult proof in our formalization.  In the end, we tried three substantially different approaches before finding one satisfactory.  All three can be found in \lstinline{Pullbacks3_alt}.

Consider for now just the forward direction of Proposition~\ref{prop:abs-two-pullbacks}, which states that if both squares have the universal property of pullbacks, then so does the composite. Let $f \colon A \to C$, $g \colon B_1 \to C$, $h \colon B_2 \to C$, and $k \colon P_1 \to B_1$ denote the maps so labeled in the diagram there.  Our first approach invoked the concrete two pullbacks lemma, Proposition~\ref{prop:conc-two-pullbacks}, which states that
\[
 \Pb(g^* f, h) \equiv \Pb(f, g \cdot h).
\]
We then derived the following chain of equivalences, using the fact that cones from $X$ to the cospan $(f, g)$ are equivalent to maps from $X$ to the standard pullback:
\begin{eqnarray*}
 (X \rightarrow P_2) & \equiv & \Cone(X; k, h)  \\
  & \equiv & (X \rightarrow \Pb(k, h)) \\
  & \equiv & (X \rightarrow \Pb(g^* f, h)) \\
  & \equiv & (X \rightarrow \Pb(f, g \cdot h)) \\
  & \equiv & \Cone(X; f, g \cdot h).
\end{eqnarray*}
Here the second and last equivalences are just the universal properties of the concrete pullbacks.  The notation $g^* f$ in the third equivalence denotes the pullback of $f$ along $g$ according to the concrete pullback construction; this equivalence relies on the fact that any abstract pullback is equivalent to the concrete one, and the fact that the concrete pullback construction is functorial.  The fourth equivalence is just (post-composition with) the concrete two pullbacks equivalence, Proposition~\ref{prop:conc-two-pullbacks}.

The equivalence of the left- and right-hand sides of the chain above \emph{almost} gives what we want: however, the universal property for the outer pullback square requires not just that an equivalence exists, but that the \emph{canonical} map from $X \to P_2$ to $\Cone(X; f, g \cdot h)$ is an equivalence.

What remains is thus to show that the map we have just constructed is homotopic to the canonical one!  This, however, turned out to be extremely difficult. The problem was a failure of modularity: all we could do was unwrap the long, complicated term, and calculate. We managed to do this, but although the tactic engine declared the effort successful, we were unable to get it past the type-checker (presumably because the resulting term was too large).

Our second approach involved constructing the desired inverse by hand. Any cone $\mu : \Cone(X; f, g \cdot h)$ over the outer cospan can be reinterpreted as a cone $\mu' : \Cone(X; f, g)$ over the right cospan. Applying the universal property of the cone from $P_1$, we obtain a map $m_1 \colon X \to P_1$ inducing $\mu'$; we can then take $m_1$ as the first leg of a cone $\mu'' : \Cone(X; k, h)$ on the left cospan. Applying the universal property of the cone from $P_2$ then gives a map $m_2 \colon X \to P_2$, as desired.  However, the task of proving that this construction is indeed a two-sided inverse for $(\mu \circ -)$ turned out to be difficult. For example, the first task requires one to show that, starting with a cone $\mu : \Cone(X; f, g \cdot h)$, carrying out the procedure above to obtain a map from $X$ to $P_2$ and then taking the induced cone, the resulting cone $\nu : \Cone(X; f, g)$ is connected by a path to the original $\mu$. As described in Section~\ref{sec:constructing-paths}, this involves showing not only that the component maps agree, but also that the resulting families of equality proofs agree as well; this turns out to be an interesting but laborious exercise in bicategorical path-algebra.

We finally settled on the approach described in Section~\ref{sec:two-pullbacks}, which establishes both directions of Proposition~\ref{prop:abs-two-pullbacks} simultaneously. Showing that the type $\Cone(X; k, h)$ of cones on the left cospan is equivalent to the type $\Cone(X; f, g \cdot h)$ of cones on the outer cospan required some effort, but the result was still considerably cleaner than either of the previous proofs.  With that in hand, all that remained was to show that the triangle depicted in the proof of Proposition~\ref{prop:abs-two-pullbacks} in Section~\ref{sec:two-pullbacks} commutes. To our very pleasant surprise, this fact had a one-line proof in Coq:
\begin{lstlisting}
  Lemma two_pullback_triangle_commutes {P1 : Type}
      (C1 : cospan_cone f g P1)
      {P2 : Type} (C2 : cospan_cone (cospan_cone_map2 C1) h P2)
      {X : Type} (m : X -> P2)
  : top_cospan_cone_to_composite C1 (map_to_cospan_cone C2 X m)
  = map_to_cospan_cone (top_cospan_cone_to_composite C1 C2) X m.
  Proof.
    exact 1.
  Defined.
\end{lstlisting}
In other words, the left- and right-hand sides are definitionally equal.

\bibliographystyle{amsalphaurlmod}
\bibliography{fibcats-bib}

\end{document}